\colorlet{Changes@Color}{red}
\newtheorem{thm}{Theorem}
\algnewcommand{\Initialize}[1]{%
  \State \textbf{initialization:}
  \Statex \hspace*{\algorithmicindent}\parbox[t]{.8\linewidth}{\raggedright #1}
}
\newcommand{\p}[1]{\mathbb{P} \left\{ #1 \right\}}
\newcommand{\e}[1]{\mathbb{E} \left[ #1 \right]}
\newcommand{\var}[1]{\mathrm{var} \left( #1 \right)}
\newcommand{\cov}[1]{\mathrm{cov} \left( #1 \right)}
\newcommand{\xh}{x}
\let\orgdescriptionlabel\descriptionlabel 
\renewcommand*{\descriptionlabel}[1]{%
\let\orglabel\label   
\let\label\@gobble   
\phantomsection   
\edef\@currentlabel{#1}%
\let\label\orglabel   
\orgdescriptionlabel{#1}%
} 
\renewenvironment{description}[1][0pt]
  {\list{}{\labelwidth=0pt \itemsep=0pt \leftmargin=#1
   }}
  {\endlist}
\begin{document}

\title{Variance Reduction for Sequential Sampling\\ in Stochastic Programming}
\author[1]{Jangho Park\thanks{\texttt{park.1814@osu.edu}}}
\affil[1]{{\small Department of Integrated Systems Engineering, the Ohio State University, Columbus, Ohio 43210}}
\author[2]{Rebecca Stockbridge\thanks{\texttt{rebecca.stockbridge@gmail.com}}}
\affil[2]{{\small Birmingham, Michigan}}
\author[1]{G\"{u}zin Bayraksan\thanks{\texttt{bayraksan.1@osu.edu}; Corresponding author}}
\date{}
\maketitle
\vspace*{-0.3in}
\begin{abstract}   
\noindent
This paper investigates the variance reduction techniques Antithetic Variates (AV) and Latin Hypercube Sampling (LHS) when used for sequential sampling in stochastic programming and presents a comparative computational study. 
It shows conditions under which the sequential sampling with AV and LHS satisfy finite stopping guarantees and are asymptotically valid, discussing LHS in detail.
It computationally compares their use in both the sequential and non-sequential settings through a collection of two-stage stochastic linear programs with different characteristics. 
The numerical results show that while both AV and LHS can be preferable to random sampling in either setting, LHS typically dominates in the non-sequential setting while performing well sequentially and AV gains some advantages in the sequential setting.  
These results imply that, 
given the ease of implementation of these variance reduction techniques, armed with the same theoretical properties and improved empirical performance relative to random sampling, AV and LHS sequential procedures present attractive alternatives in practice for a class of stochastic programs.\\

\vspace*{-0.03in}

\noindent 
\textbf{Keywords}: Sequential sampling; Variance reduction; Latin hypercube sampling; Antithetic variates; Stochastic optimization; Monte Carlo simulation

 \end{abstract}

\section{Introduction}
\label{sec:intro}
\vspace*{-0.027in}

We consider stochastic optimization problems of the form
\begin{align}
\tag*{$(\text{SP})$}
\min_{x \in X}\e{f(x, \tilde{\xi})}= \min_{x \in X}\int_{\Xi}f(x,\xi)P(d\xi), \label{SP}
\end{align}
where $X \subseteq \mathbb{R}^{d_x}$ represents the set of constraints the decision vector $x$ must satisfy. 
The random vector $\tilde{\xi}$ on ($\Xi, \mathcal{B}, P$) has support $\Xi \subseteq \mathbb{R}^{d_{\xi}}$, where $\mathcal{B}$ is the Borel $\sigma$-algebra on $\Xi$. 
The distribution of $\tilde{\xi}$, denoted $P$, is assumed not to depend on $x$, and the expectation operator $\mathbb{E}$ is taken with respect to $P$. 
The function $f :X \times \Xi \rightarrow \mathbb{R}$ is assumed to be a Borel measurable, real-valued function, with inputs being the decision vector $x$ and a realization $\xi$ of the random vector $\tilde{\xi}$. 
We will further narrow down the problem class in  Section \ref{sec:vr}.

A major difficulty with \ref{SP} is that it is typically very difficult---and often impossible---to solve exactly.  
This is because for practical problems, $\e{f(x, \tilde{\xi})}$  is a multi-dimensional integral of a complicated function $f$.  
Therefore, it is often impossible to calculate $\e{f(x, \tilde{\xi})}$ exactly for a fixed $x$, let alone optimize it over $x \in X$. 
In these cases, Monte Carlo sampling provides an attractive approximation method.

Consider the Sample Average Approximation (SAA) of \ref{SP} formed using a random sample of size $n$, $\{\tilde{\xi}^1, \tilde{\xi}^2,\ldots,\tilde{\xi}^{n}\}$:\vspace*{-0.15in}
\begin{align}
\tag*{$(\text{SP}_{n})$}
\min_{x \in X} \frac{1}{n}\sum_{i=1}^{n} f(x,\tilde{\xi}^i).  \label{SP_nk}
\end{align}
Even though the asymptotic properties of the optimal value and the set of optimal solutions of SAA have been studied \citep[e.g.,][]{shaDR:09}, in practice, it is important to find high-quality approximate solutions to \ref{SP} using modest sample sizes.

Sequential sampling provides one way to achieve this goal.  
It sequentially increases the sample size, assessing the quality of approximate solutions, until an arguably high-quality solution is identified. 
This paper aims to improve the reliability of existing sequential sampling procedures with minimal computational effort through easy-to-implement variance reduction techniques.
Before discussing the contributions of this paper, let us discuss why sequential sampling may be preferred.

Consider instead solving a single SAA with a fixed (possibly large) sample size.   
In this case, one could try to identify a sample size 
that probabilistically guarantees  the  solutions  are of high quality. Such sample sizes for SAA have been studied, e.g., by \cite{shahom:00,Kleywegt2001,shaHK:02}.
These sample-size estimates provide valuable insight into the use of Monte Carlo sampling techniques. However, they are typically not suited for practical use: They contain quantities that are difficult to estimate, and even if these quantities are known or estimated, the sample sizes can be often overly conservative.  

Alternatively, one could solve a single SAA problem of a desired sample size $n$ and then assess the quality of the approximate solution found in this way (and stop).  Many methods have been developed for quality assessment via Monte Carlo sampling by estimating optimality gaps \citep{mulvey1995new,mak1999monte}, testing optimality conditions \citep{higle_sen_96b,shapiro1998simulation,royset_12,Sakalauskas2002558}, and checking the stability of solutions \citep{kaut_wallace_07}, among others.   
If this assessment reveals a high-quality solution, then this method works fine. 

However, it may happen that this quality assessment reveals a low-quality solution.  
This could be, e.g., the result of sampling error. Alternatively, because the sample size $n$ was fixed and there is no {\it a priori} knowledge of an adequate sample size in getting a good solution, this procedure may indeed yield a low-quality solution.
Then, a user is left with how to increase the sample size in order to obtain a high-quality solution. 

This is where the sequential sampling procedures help practitioners.  
By providing rules to increase the sample size and to stop sampling, they find a sample size that is not typically overly conservative to obtain high-quality solutions. 
We note that the use of variance reduction techniques in this setting is paramount because they can significantly improve the reliability of the estimators, helping to stop earlier, increasing the probability of correctly identifying high-quality solutions, and providing substantially smaller confidence interval widths on solution quality.


Motivated by the potential benefits of variance reduction techniques within sequential sampling, this paper studies the use of sampling techniques Antithetic Variates (AV) and Latin Hypercube Sampling (LHS) within sequential sampling for \ref{SP}. 
A second motivation of the paper is to provide a comparative computational study to gain insights into when AV and LHS should be used. Toward this end, the paper compares their performances relative to random sampling in both the sequential and non-sequential settings.  
\vspace*{-0.13in}

\subsection{Related Literature}
\label{ssec:lit}
\vspace*{-0.03in}

Numerous variance reduction techniques have been studied for stochastic programming. 
We refrain from a thorough review on this topic and discuss some commonly-investigated alternative sampling techniques.
We direct the readers to \cite{hombay_14,hombay_15} for further discussion and references on this topic.
This work differs from most of this literature in that it considers variance reduction in a sequential sampling setting, which has received little attention. 

The commonly-investigated alternative sampling techniques for variance reduction in \ref{SP} include 
(i) importance sampling \citep[e.g.,][]{Barrera2016,dantzig1990parallel,higle1998variance,infanger1992monte,GlynnI2013,kozmik_morton_15}, (ii) (randomized) Quasi-Monte Carlo (QMC) \citep[e.g.,][]{pennanen2005epi,koivu2005variance,homem2008rates,homem2011sampling,heitsch_etal_16}, (iii) AV \citep[e.g.,][]{freLT:10,higle1998variance,koivu2005variance}, and (iv) LHS \citep[e.g.,][]{bailey1999response,freLT:10,drew2007quasi,deMatos2016,linderoth_etal_06}. 
Importance sampling requires finding a sampling distribution, which may be difficult to obtain for some \ref{SP}. 
Randomized QMC has been found effective for a class of stochastic programs, but generating QMC observations can become costly and the method can lose effectiveness as the dimension of $\tilde{\xi}$ increases for some problems. 
Dimension-reduction strategies for QMC can themselves be computationally burdensome \citep{drew2006quasi}.  

In contrast, both AV and LHS are easy to implement and work relatively well as the dimension of $\tilde{\xi}$ increases. 
It is important to note that in the context of \ref{SP}, many variance reduction techniques---including LHS and to a lesser extent AV---have been found to also reduce bias, further improving the quality of sampling-based estimators; see, e.g., the computations in \cite{freLT:10} and \cite{stockbridge_bayraksan_2015}. 
Thus, overall, AV and LHS constitute good choices for variance and bias reduction with virtually no added computational cost.

We now discuss how this paper relates to and differs from a series of papers on solution quality assessment. 
Beginning with \cite{mulvey1995new} and \cite{mak1999monte}, several researchers studied how to assess solution quality for a given candidate solution via estimating optimality gaps and optimal values \citep{bayraksan2006assessing,drew2007quasi,love_bayraksan_tomacs_15,chen2014validating,stockbridge_bayraksan_13,freLT:10}. 
In particular, \cite{stockbridge_bayraksan_2015} and \cite{bayraksan_18} study AV and LHS for optimality gap estimation in a non-sequential setting.
Here, we use them within sequential stopping rules.
 
This change of focus---from non-sequential to sequential---causes important differences. First, in the non-sequential setting, both the candidate solution and the sample size are \textit{fixed}.  In this paper, however, they change from iteration to iteration.  So now, {\it both} the candidate solution and the sample size when the sequential procedures stop become {\it random variables}.  This requires new probabilistic tools for analysis and thus, different proofs.  
In addition to changes in the theoretical analysis, sequential methods also differ from non-sequential methods computationally. 
This is because they may induce bias due to the stopping rule, see, e.g., \cite{glynn1992asymptotic}, and hence require more care.

The sequential use of optimality gap estimates has been investigated in papers \citep{morton1998stopping,bayraksan2011sequential,pierre2011combined,bayraksan2012fixed}, which provide alternative stopping rules under different assumptions.
They have been used to solve, e.g., stochastic scheduling problems
\citep{keller_bayraksan_10,wu_sioshansi_16}.
\vspace*{-0.13in}

\subsection{{Contributions}}
\label{ssec: contributions}
\vspace*{-0.03in}

This paper rigorously investigates, to the best of our knowledge for the first time, the use of AV and LHS within the sequential sampling framework of \citet{bayraksan2011sequential}  both theoretically and computationally.  
It provides conditions under which sequential sampling with AV and LHS stop in a finite number of iterations and yield asymptotically valid confidence intervals on the quality of solutions. 
While both AV and LHS have been investigated earlier for \ref{SP}, this paper differs from the existing literature by focusing on the sequential aspects.

The paper  builds upon the sequential framework of \cite{bayraksan2011sequential} in several ways.
First and foremost, by using a different set of conditions and making important modifications to the proofs for LHS, the paper rigorously justifies the use of LHS sequential procedures. 
Theoretical results for AV generalize with only slight adjustments. That said, we provide alternative proofs for the AV sequential procedures based on the proofs presented in this paper.

The paper considers three alternate sets of assumptions for LHS sequential procedures; these will be precisely defined in Section \ref{sec:the}. 
The first relates the upper bounds on large deviation probabilities between LHS and random sampling \citep{drew2012some}. 
This assumption is satisfied, e.g., when $f(x,\cdot)$ is additive or component-wise monotone. 
If the first assumption cannot be verified, an additional condition on the sequential procedure's parameters can be used. 
This additional condition is based on the residual term in a functional ANOVA decomposition of $f$. 
The third assumption, which is also present and needed in the first two cases to invoke the Central Limit Theorem (CLT) for LHS, just considers boundedness of $f$. 
The first two assumptions, while being more restrictive, require only a sublinear growth of the sample sizes. In contrast, without either of the first two assumptions, the less restrictive third assumption demands a superlinear growth.

Both sublinear and superlinear growths of the sample sizes have been investigated in \cite{bayraksan2011sequential};
however, 
the assumptions and proofs for LHS in this paper are entirely new. To the best of our knowledge, the first two assumptions of this paper have never been used in the context of sequential sampling for stochastic programming. 
A variant of the third assumption, namely, finiteness of the second moment of $f$, has been considered in the earlier work. 
In fact, we use this variant of the third assumption for the AV sequential procedures with superlinear growth.  However, even in this third case, the proofs for LHS sequential procedures require  modifications.

Another main contribution of the paper is a comparative computational study.
It not only compares the use of AV and LHS to random sampling in the sequential setting, but also contrasts their use in the non-sequential setting.
We are not aware of any works that examine  the sequential versus non-sequential uses.  
This analysis yields that AV can gain some advantages in the sequential setting (unlike the non-sequential setting) and LHS typically dominates in the non-sequential setting. 
This is because AV tends to generate smaller sample variances in this context. 
A similar behavior of the AV sample variance estimator in the non-sequential setting was observed earlier \citep{stockbridge_bayraksan_2015}. 
However, no explanation was given. 
This paper provides an asymptotic explanation for this behavior for the first time, and it further investigates its implications in both the sequential and non-sequential settings.

We end with a remark that the proposed sequential sampling procedures with AV and LHS can either be used as stand-alone procedures, solving SAAs with increasing sample sizes, or be embedded in other sampling-based algorithms.  
Internal stopping rules have been investigated for various methods 
\citep[e.g.,][]{higle1991statistical, higle1996stochastic,norkin1998branch,lan_nemirovski_shapiro_12,dumskis2016nonlinear,sen2014mitigating}. 
An explicit assumption in our framework is that the samples that assess solution quality are {\it independent} of the samples that generate solutions.
This independence assumption ensures a valid solution quality assessment. 
Consequently, the sequential procedures with AV and LHS could complement these internal quality checks to validate high-quality solutions for various sampling-based methods.
\vspace*{-0.09in}

 \subsection{Organization}
\vspace*{-0.02in}

The rest of the paper is organized as follows. 
The next section reviews the solution quality assessment procedures and the sequential framework used in this paper.   
Section \ref{sec:vr} applies AV and LHS to the sequential framework, and 
Section \ref{sec:the} provides conditions under which the proposed procedures are asymptotically valid and stop at finite sample sizes. 
We compare the performance of the proposed sequential sampling procedures numerically in Section \ref{sec:exp}. 
Finally, Section \ref{sec: con} ends the paper with conclusions and future work. 
Throughout the rest of the paper, we refer to  independent and identically distributed sampling simply as IID. 
\vspace*{-0.2in}

\section{Background on Sequential Sampling with IID}\vspace*{-0.05in}
\label{sec:background}

We first review two procedures for solution quality assessment, the Single Replication Procedure (SRP) and the Averaged Two-Replication Procedure (A2RP) \citep{bayraksan2006assessing} in Section \ref{ssec:solqual}, which are subsequently used within the sequential framework reviewed in Section \ref{ssec:overview}. 
We will soon apply AV and LHS to this framework.\vspace*{-0.05in}

\subsection{Solution Quality Assessment}\vspace*{-0.05in}
\label{ssec:solqual}

Let us begin with some notation:  $z^*$ denotes the optimal value and  $X^*=\text{argmin}_{x \in X} \e{f(x, \tilde{\xi})}$ denotes the set of optimal solutions of \ref{SP}. 
An optimal solution is represented by $x^* \in X^*$. 
Similarly, $z^*_{n}$ denotes the optimal value and $x^*_{n}$ denotes an optimal solution of \ref{SP_nk}.  
Because it is important to distinguish between sampling schemes, we will shortly add a notation to $z^*_{n}$  and $x^*_{n}$ to identify the sampling method. 
We also distinguish between a {\it candidate solution} $\xh \in X$, which is a feasible solution that may or may not be optimal, and an {\it optimal solution} $x^*$.

We assess the quality of a given candidate solution $\xh$ by its optimality gap---the lower the gap, the higher the quality. 
The optimality gap of $\xh$, denoted $\mathcal{G}_{\xh}=\e{f(\xh,\tilde{\xi})}-z^* \ge 0$, is often impossible to calculate.  So, SRP estimates this quantity, along with its sample variance, via
\begin{subequations}
\label{eq:srp}
\begin{equation}
  \texttt{GAP}^{S}_{\mathcal{I},n}(\xh)=
\frac{1}{n}\sum_{i=1}^{n} f(\xh,\tilde{\xi}^i)-z_{\mathcal{I},n}^* = \frac{1}{n}\sum_{i=1}^{n} \left( f(\xh,\tilde{\xi}^i)-f(x_{\mathcal{I},n}^*,\tilde{\xi}^i) \right), \label{eq:gapsrp}\vspace*{-0.05in}
\end{equation}    
\begin{equation}
  \texttt{SV}^{S}_{\mathcal{I},n}(\xh) = 
\frac{1}{n-1} \sum^{n}_{i=1}\left[\left(f(\xh,\tilde{\xi}^i)-f(x^*_{\mathcal{I}, n},\tilde{\xi}^i)\right)-\texttt{GAP}^{S}_{\mathcal{I},n}(\xh)\right]^2. \label{eq:svsrp}
\end{equation}
\end{subequations}
In the above estimators, the superscript $S$ denotes SRP and the subscript $\mathcal{I}$ denotes IID. 
In particular, the subscript $\mathcal{I}$ is added to $z_{\mathcal{I},n}^*$ and $x_{\mathcal{I},n}^*$. 

For small sample sizes, SRP has been found to underestimate the optimality gap. 
To improve the small-sample behavior of SRP, A2RP averages two independent SRP estimators of size $n/2$ and pools them to obtain the optimality gap estimators: 
\begin{equation}
\label{eq:a2rp}
\texttt{GAP}^{A}_{\mathcal{I},n}(\xh)=\frac{1}{2}\left(\texttt{GAP}^{S,1}_{\mathcal{I},n/2}(\xh)+\texttt{GAP}^{S,2}_{\mathcal{I},n/2}(\xh)\right)
\ \ \ \text{   and   }\ \ \
\texttt{SV}^{A}_{\mathcal{I},n}(\xh)=\frac{1}{2}\left(\texttt{SV}^{S,1}_{\mathcal{I},n/2}(\xh)+\texttt{SV}^{S,2}_{\mathcal{I},n/2}(\xh)\right).
\end{equation}
The A2RP estimators now have superscript $A$. 
Both procedures use a total sample size of $n$.

The SRP---and hence the A2RP---optimality gap estimator is biased: $\e{\texttt{GAP}^{S}_{\mathcal{I},n}(\xh)}=\e{f(\xh,\xi)}-\e{z^*_{n}} \geq \mathcal{G}_{\xh}$.
This is because $z^*_{n}$ is a biased estimator of $z^*$, $\e{z^*_{n}} \leq z^*$.
	The bias shrinks as the sample size increases \citep{mak1999monte}.
However, compared to IID, 
 LHS and AV have been observed to reduce this bias without increasing the sample size \citep{freLT:10}. 
\vspace*{-0.05in}

\subsection{Sequential Sampling Procedure}\vspace*{-0.05in}
\label{ssec:overview}

Suppose we have at hand a method that generates a sequence of candidate solutions, e.g., by solving SAAs. 
Sequential sampling briefly works as follows:   
It assesses the quality of a candidate solution with an initial sample size by estimating its optimality gap. 
If this optimality gap estimate is small enough according to a stopping criterion, the procedure stops.  
Otherwise, it takes another candidate solution, assesses its quality with an increased sample size, and repeats this procedure until the optimality gap estimate falls below a certain level.

To formally describe the procedure, we use subscript $k$ to denote the iteration number. So, $\xh_k$ and $n_k$ denote the candidate solution and the sample size at iteration $k$, respectively.  
The quantities $\texttt{GAP}_{n_k}(\xh_k)$ and $\texttt{SV}_{n_k}(\xh_k)$ represent a {\it generic} optimality gap point estimate and its sample variance at iteration $k$. These could be the SRP estimators in \eqref{eq:srp} or, assuming the sample size $n_k$ is divisible by two, the A2RP estimators in \eqref{eq:a2rp}.
The procedure takes as input the parameters $h>h^{\prime}>0$,  $\varepsilon>\varepsilon^{\prime}>0$, and $\alpha \in (0,1)$ for the desired confidence level $1-\alpha$ on the quality of solutions.  
Here, the parameters $\varepsilon,\varepsilon^{\prime}$ are small numbers, e.g., $\varepsilon=2\times 10^{-7}, \varepsilon^{\prime}=10^{-7}$.  
They are needed for theoretical results, and in practice, they serve to correct any numerical instabilities.

The procedure terminates at the first iteration when the following stopping rule is satisfied:\vspace*{-0.03in}
\begin{align}
T=\inf_{k \geq 1}  \left\{ k:\texttt{GAP}_{n_k}(\xh_k)\leq h' \sqrt{\texttt{SV}_{n_k}(\xh_k)} + \varepsilon' \right\}.  \label{T}
\end{align}
The above rule identifies a solution with a small optimality gap as a fraction of its standard deviation.  
So, if the variability is high, one is willing to stop with a larger optimality gap estimate, and if the variability is low, one stops with a smaller optimality gap estimate. 
This \textit{relative} stopping rule could be desirable in cases when little is known about the variability of the problem, for instance, when small sample sizes are used for assessing solution quality. Otherwise, e.g., when using large sample sizes for quality assessment with a high computational budget, it is possible to adjust the stopping rule \eqref{T} by including an additional condition $\left\{\sqrt{\texttt{SV}_{n_k}(\xh_k)}\leq b\right\}$, where $b$ is a positive real constant without affecting desired theoretical properties \citep{bayraksan2011sequential}. In this case, the procedure stops when the optimality gap falls below an {\it absolute} (not relative) value.

When the stopping rule (\ref{T}) is satisfied, the procedure returns solution $\xh_T$ and an approximate $(1-\alpha)$-level confidence interval\vspace*{-0.03in}
\begin{equation*} 
\texttt{CI}_{n_T}(\xh_T)=\left[0,h\sqrt{\texttt{SV}_{n_T}(\xh_T)}+\varepsilon\right]
\end{equation*} 
on its optimality gap, $\mathcal{G}_{\xh_T}$.
{If the additional stopping criterion that also controls $\sqrt{\texttt{SV}_{n_k}(\xh_k)}$ is used, the confidence interval is updated to $\left[0,hb+\varepsilon\right]$. Throughout the rest of the paper, we focus on the relative stopping rule \eqref{T} without the additional condition.}
Algorithm \ref{alg:seq} summarizes the sequential sampling procedure. 

When using IID, under a finite moment generating assumption, if the sample sizes are increased according to\vspace*{-0.15in} 
\begin{align}
n_k \geq \left(\frac{1}{h-h'}\right)^2(c_p+2p\ln^2k),  \label{nk_IID}
\end{align}
where $c_p=\max\left\lbrace2\ln\left(\sum^{\infty}_{j=1}j^{-p\ln j}/\sqrt{2\pi}\alpha\right),1\right\rbrace$ and $p>0$,
the sequential sampling stops in a finite number of iterations with probability one (w.p.1) and
asymptotically \added{as $h \downarrow h'$ (i.e., as the minimal sample-size requirements given in \eqref{nk_IID} grow)} produces a $(1-\alpha)$-level confidence interval on $\mathcal{G}_{\xh_T}$. 
Note that the sample-size increase formula in \eqref{nk_IID} is sublinear in the iteration number $k$. 
Even though it is sublinear in $k$, it is proportional to $(h-h')^{-2}$, which can be potentially large when $h-h'$ is small. In practice, an initial sample size $n_1$ is given as input to the sequential procedure. This initial sample size is used to determine $h-h'$ through $\left(h-h'\right)^2 \geq \frac{c_p}{n_1}$ by setting $k=1$ in \eqref{nk_IID}. In our computational experiments, due to modest initial sample sizes \added{$n_1$ around 100--500},  
we also obtain modest $h-h'$ values around 0.2--0.4  (Table \ref{tb: para} in Section \ref{ssec: es}).
Under less restrictive assumptions than a finite moment generating function, the sample sizes must be increased at a superlinear rate. 
We will soon recover similar theoretical results---with some modifications---under certain conditions when using AV and LHS.
The required updates and adjustments to accommodate these alternative sampling techniques will be discussed in Sections \ref{sec:vr} and \ref{sec:the}.

\begin{algorithm}[tb]
  \caption{Sequential sampling framework}
  \label{alg:seq}
{\footnotesize 
  \begin{algorithmic}[1]
    \Initialize{Set $k=0$. Select $\alpha\in (0,1)$, and  $h>h^{\prime}>0$, $\varepsilon > \varepsilon^{\prime}>0$. \label{Initialize}
    STOP = No.\smallskip 
}
    \Procedure{Sequential sampling}{}
    \While{STOP = No}
	    \State{$k=k+1$}
	    \State{\textsc{Generate Candidate Solution:}\ \  $\xh_k$} \label{step1}
	    \State{\textsc{Sample:}\ \  Decide sample size $n_k \geq n_{k-1}$  ($n_0 =2$), and generate sample $\{\tilde{\xi}^1,\ldots,\tilde{\xi}^{n_k}\}$} \label{step21}
	    \State{\textsc{Assess Solution Quality:}\ \ Calculate \texttt{GAP}$_{n_k}$($\xh_k$) and \texttt{SV}$_{n_k}$($\xh_k$)} \label{step22}
		\If{\texttt{GAP}$_{n_k}$($\xh_k$) $\leq$ $h'\sqrt{\texttt{SV}_{n_k}(\xh_k)}+\varepsilon'$} \label{step3}
	    	\State{Set $T=k$}
			\State{\textsc{Output:}\ \ Solution $\xh_T=\xh_k$ with \texttt{CI}$_{n_T}(\xh_T)=\left[0,h\sqrt{\texttt{SV}_{n_T}(\xh_T)}+\varepsilon\right]$} \label{output}
	    	\State{STOP = Yes}
	    \EndIf
    \EndWhile
    \EndProcedure
  \end{algorithmic}
}
\end{algorithm}
\vspace*{-0.05in}

\section{Sequential Sampling with Variance Reduction}
\label{sec:vr}
\vspace*{-0.05in}

\subsection{Problem Class and Assumptions}\vspace*{-0.05in}
\label{ssec:problemclass}

Before we embed AV and LHS to the sequential sampling procedure, let us first narrow down the type of problems we consider. 
We make the following assumptions regarding \ref{SP} and collectively refer to them as (A$_{\text{SP}}$):\vspace*{-0.03in}
\begin{description}[0.5cm]
	\item[(A$_{\text{SP}}$)\label{(Aso)}] [{\small \it Assumptions on \ref{SP}}]\vspace*{-0.1in}
	\begin{description}[0.5cm]
		\item[(i)\label{Aso1}] $f(\cdot, \tilde{\xi})$ is continuous on $X$, w.p.1;
		\item[(ii)\label{Aso2}] There exists a positive real number $C$ such that $\sup_{x\in X, \xi \in \Xi}|f(x,\xi)|\leq C$;
		\item[(iii)\label{Aso3}] $X \neq \emptyset$ and is compact;
		\item[(iv)\label{Aso4}] Components of $\tilde{\xi}$ are independent with either analytically or numerically invertible cumulative distribution functions.
	\end{description}
\end{description}

This class of \ref{SP} includes numerous important problems like two-stage stochastic linear programs, stochastic convex programs, and stochastic discrete programs, provided they satisfy the above assumptions.   
Recall the continuity assumption \ref{(Aso)}\ref{Aso1} is automatically satisfied when $X$ is discrete.
So, two-stage stochastic integer programs with binary first-stage variables work in our framework. Their second stage could have integrality constraints.  
However, \ref{(Aso)} excludes, e.g., stochastic integer programs with mixed-integer first-stage variables and integer second-stage variables due to the non-continuity of $f$.

The boundedness condition \ref{(Aso)}\ref{Aso2} is needed to invoke CLT for LHS \citep{owen1992central}.  
The third condition may be violated for some \ref{SP}; 
however, it is not overly restrictive in practice.  
Problems with unbounded $X$ and finite optimal solutions can be restricted to a bounded subset of $X$.  
Finally, while the random parameters of many problems may have complicated dependencies, the last condition is necessary to easily apply AV and LHS.

Let us discuss in more detail when the independence assumption in \ref{(Aso)}\ref{Aso4} may be satisfied.  
First, for some problems, the independence assumption may be a natural or reasonable assumption to make. Consider a large-scale online retailer that sells a variety of products---some very different from others. A high-level strategic planning model for this retailer might consider aggregate demands for groups of products.  For example, group 1 includes toothpaste and toothbrushes, whereas group 2 includes diamond earrings and diamond bracelets.  If the groups have sufficiently different characteristics, it may be reasonable to assume the aggregate demands for groups are independent (even though demand of individual products within a group may be dependent).

The independence assumption may also be satisfied when each random parameter of \ref{SP} can be represented through simple linear or monotone transformation of mutually independent random variables. 
For example, a correlated multivariate normal vector can be written as an affine function of independent standard normal random variables.  
Other distributions that can be written using simple monotone transformations include multivariate lognormal and a special case of multivariate gamma  \citep{prekopa_szantai_78,keri_szantai_11}.  These distributions have been used in inventory management \citep{Choi2011},  finance \citep{Battocchio2007}, and water resources management \citep{dupacova_etal_91}. 
When simple monotone transformations of independent random variables are used, we assume these transformations are  embedded into the function $f(x,\tilde{\xi})$, and LHS and AV are applied to the independent components $\tilde{\xi}$. 
\added{
Observe that the independent components could include unbounded random variables like standard normal, which may lead to to violations of the boundedness assumption \ref{(Aso)}\ref{Aso2}. In practice, such components are often finitely discretized to form \ref{SP}. 
Finite discretizations typically allow the boundedness assumption to be satisfied. 
We will discuss the implication of  transformations on other conditions needed for our theoretical results in Section \ref{ssec:Asym}.
}

Finally, the above discussion highlights an important way independence arises in many practical problems, including one of our test problems \citep{sen1994network}. 
Random parameters in many applications are  forecast through statistical methods. These are then input into \ref{SP} as `fixed forecast + random error'. A typical assumption is that the errors or an underlying vector that forms the errors after  monotone transformations are independent and identically distributed,
and they form the components of $\tilde{\xi}$. 
Once again, LHS and AV can be applied to these independent components.

When the random parameters have complex dependencies that cannot be handled by the above methods, it may be possible to apply AV 
\citep{rubinstein_etal_85} and LHS \citep{stein_87,packham_schmidt_10} to the dependent components, but we do not pursue it here. We also refer the readers to \cite{bilgho:06} for methods to write components of a random vector as more complicated functions of independent variables.

We also make the following assumption:\vspace*{-0.01in} 
\begin{description}[1cm]
\item[(A1)\label{(A1)}] The sequence of candidate solutions $\{\xh_k\}$ has at least one limit point in $X^*$, w.p.1.
\end{description}
\noindent
This assumption allows us to generate the candidate solutions by any method that satisfies \ref{(A1)}. 
Many common methods satisfy this assumption under mild conditions. 
For example, independent SAAs with increasing sample size---e.g., \ref{SP_nk} with $n=n_k$---generates such a sequence for various problems \citep{shaDR:09}. 
In fact, SAAs can be generated with AV or LHS under appropriate conditions \citep{homem2008rates}.
Other sampling-based methods could be used, including stochastic approximation, stochastic decomposition for two-stage stochastic programs with recourse, and simulation optimization methods; see, e.g., the compilations \citep{hbsimopt_05,hombay_14}.

With these assumptions in place, we are ready to embed AV and LHS to the sequential framework. 
Throughout the rest of the paper, subscript $s \in \{\mathcal{I},\mathcal{A},\mathcal{L}\}$ indicates IID, AV, and LHS. 
For instance, $x^*_{\mathcal{A}, n}$ and $x^*_{\mathcal{L}, n}$ denote an optimal solution of SAA formed via AV and LHS, respectively. 
As mentioned before, in the optimality gap estimators, we also use superscript $a \in \{S,A\}$ to indicate SRP and A2RP. 
So, the pair
$(s,a)$ represents (sampling method, assessing solution quality procedure), and
 $\mathtt{GAP}^a_{s,n}(\xh)$ and  $\mathtt{SV}^a_{s,n}(\xh)$ denote the optimality gap estimators formed with sampling method $s$ and assessment procedure $a$ for a given $\xh \in X$.
Similarly, from this point on, when we refer to the sequential sampling procedure summarized in Algorithm \ref{alg:seq} or the stopping rule (\ref{T}), we assume a specific ($s$, $a$) pair.
\vspace*{-0.05in}

\subsection{Sequential Sampling with AV}
\label{ssec:av}

Let us first recall how to sample an AV pair: (i) Draw an independent uniform variate $\tilde{u}_\mathcal{A}$ from a $U(0, 1)^{d_{\xi}}$ distribution (recall $d_\xi$ denotes the dimension of $\tilde{\xi}$); then, (ii) obtain the AV pair  $\tilde{u}_{\mathcal{A}'}=\overrightarrow{1}-\tilde{u}_\mathcal{A}$, where $\overrightarrow{1}$ is a vector of 1s; finally (iii) apply the inverse transform method to $(\tilde{u}_{\mathcal{A}}, \tilde{u}_{\mathcal{A}'})$ to obtain $(\tilde{\xi}_{\mathcal{A}}, \tilde{\xi}_{\mathcal{A}'})$.
Each AV pair is independent and identically distributed but the observations within pairs are not.

To embed AV to sequential sampling, we must update the optimality gap estimators. 
Let $g_{\mathcal{A}}(x,\tilde{\xi})=\frac{1}{2}\big(f(x,\tilde{\xi}_{\mathcal{A}})+f(x,\tilde{\xi}_{\mathcal{A}'})\big)$, where  $(\tilde{\xi}_{\mathcal{A}},\tilde{\xi}_{\mathcal{A}'})$ is an AV pair. 
All AV estimators are defined using this quantity. 
For instance, SAA now becomes $\min_{x \in X}\frac{2}{n} \sum_{i=1}^{n/2} g_{\mathcal{A}}(x,\tilde{\xi}^i)$; compare to \ref{SP_nk}. 
Similarly, the SRP estimators using AV are updated to 
\begin{small}
\[
\texttt{GAP}^{S}_{\mathcal{A},n}(\xh)=\frac{\displaystyle{\sum^{n/2}_{i=1}}\left[g_{\mathcal{A}}(\xh,\tilde{\xi}^i)-g_{\mathcal{A}}(x^*_{\mathcal{A}, n},\tilde{\xi}^i)\right]}{n/2},\ \ \ 
\texttt{SV}^{S}_{\mathcal{A},n}(\xh)= 
\frac{\displaystyle{\sum^{n/2}_{i=1}}\left[\left(g_{\mathcal{A}}(\xh,\tilde{\xi}^i)-g_{\mathcal{A}}(x^*_{\mathcal{A}, n},\tilde{\xi}^i)\right)-\texttt{GAP}^{S}_{\mathcal{A},n}(\xh)\right]^2}{n/2-1}.
\]
\end{small}%
As in IID, A2RP AV averages two independent SRP AV estimators. Hence they are omitted for brevity.

Another change occurs in the sample-size increase formula.   
Iteration $k$ of the sequential procedure with AV generates $n_k/2$ independent AV pairs $\left\lbrace \tilde{\xi}^1_{\mathcal{A}},\tilde{\xi}^{1}_{\mathcal{A}'},\ldots,\tilde{\xi}^{n_k/2}_{\mathcal{A}},\tilde{\xi}^{n_k/2}_{\mathcal{A}'}\right\rbrace$. Thus, we should have an even number of observations for SRP.  
Similarly, because A2RP uses two SRP estimators, $n_k$ should be divisible by four when using A2RP with AV.  
Finally, we modify the sample-size increase formula given in \eqref{nk_IID} to have $n_k/2$ on its left side. 
This is needed to invoke the IID theory on each AV pair; see Section \ref{sec:the}. 
But it also causes some changes in the behavior of AV sequential procedures; 
we will discuss {these} in Section \ref{sec:exp}. 
Table~\ref{tb: size} summarizes the necessary adjustments in the sample sizes.

\subsection{Sequential Sampling with LHS}
\label{ssec:lhs}

Let us briefly review LHS. 
To form an LHS sample of size $n_k$, denoted $\left\{\tilde{\xi}^1_{\mathcal{L}},\ldots,\tilde{\xi}^{n_k}_{\mathcal{L}}\right\}$, at iteration $k$ of the sequential procedure, for each dimension $j=1,\ldots,d_{\xi}$ (i) first, sample  $\tilde{u}^i_j$ from $U\big(\frac{i-1}{n_k},\frac{i}{n_k}\big)$ for $i=1,\ldots,n_k$; then (ii) define a random permutation of $\{1,\ldots,n_k\}$ as $\{\pi^1_j,\ldots,\pi^{n_k}_j\}$, where each of the $n_k$! permutations is equally likely; and finally (iii) apply the inverse transform method to $\tilde{u}^{\pi^i_j}_j$ to obtain $\tilde{\xi}^i_{\mathcal{L},j}$ for $i=1,\ldots,n_k$.
LHS, like AV, induces negative correlations to reduce variance. It also employs stratification on each component of $\tilde{\xi}$ to sample more evenly than IID. 

The LHS optimality gap estimators are formed similar to the IID ones given in \eqref{eq:srp} and \eqref{eq:a2rp}, with the appropriate changes in the notation and the sampling method used. 
The sample-size increase formula also remains the same; see Table~\ref{tb: size}.

\begin{table}[!tbh]
\centering
\caption{Sample Sizes used in Sequential Procedures}
\label{tb: size}
{\small
\begin{tabular}{cccc} \toprule
Sampling  method     & Assessing procedure & Sample size            & $n_k$                                                     \\ \cmidrule{1-4}
IID \& LHS           & SRP / A2RP      & -- / even                & $n_k \geq \left(\frac{1}{h-h'}\right)^2(c_p+2p\ln^2k)$            \\
AV                   & SRP / A2RP      & even / multiple of 4     & $\frac{n_k}{2}  \geq \left(\frac{1}{h-h'}\right)^2(c_p+2p\ln^2k)$            \\ \cmidrule{1-4}
\end{tabular}}
\end{table}

We note one difference when using LHS within the sequential sampling procedure. 
The sequential framework allows creating a new, independent sample at each iteration or appending additional observations to the existing ones. 
While AV allows any type of sampling (appended or entirely new AV pairs or mixture like appending AV pair of scenarios for a number of iterations followed by an entirely new sample), this is not true of LHS. 
Increasing the LHS sample size requires a completely new LHS sample. 
Therefore, LHS sequential procedure generates a new set of observations at each iteration.

LHS requires essential changes to the {assumptions and} proofs. We will discuss these in the next section.
\vspace*{-0.05in}

\section{Theoretical Analysis} \vspace*{-0.05in}
\label{sec:the}

We now examine the theoretical properties of the proposed procedures.   
Before we begin the analysis, let us introduce some results and properties used in the proofs.
\vspace*{-0.05in}

\subsection{Preliminaries}
\label{ssec:prelim}
\vspace*{-0.05in}

Assumptions \ref{(Aso)}\ref{Aso1}--\ref{Aso3} ensure that \ref{SP} has a finite optimal solution; so $X^*$ is nonempty. 
Similarly, all SAAs---formed with any sampling method---have finite optimal solutions, w.p.1. 
The asymptotic properties of the optimality gap estimators with IID, AV, and LHS have been studied in the non-sequential setting by, e.g., \cite{bayraksan2006assessing,drew2007quasi,Stockbridge2013Bias}. 
These studies show that, under \ref{(Aso)}, the optimality gap estimators are strongly consistent.  
For instance, $\texttt{GAP}^{a}_{s,n}(\xh) \rightarrow \mathcal{G}_{\xh}$ as $n\rightarrow \infty$, w.p.1 for all sampling methods $s=\mathcal{I}, \mathcal{A},\mathcal{L}$ using any assessment procedure SRP or A2RP ($a=A,S$).

For the theoretical analysis of sequential sampling, we need a somewhat stronger result for the \texttt{GAP} estimators---albeit we need this condition to hold in probability, not w.p.1.
Specifically, let $n_k$ satisfy $n_k \to \infty$ as $k \to \infty$, and let $\xh_k \in X$ be a sequence of candidate solutions with $x$ as one of its limit points, i.e., there exists a subsequence $\mathcal{K}$ of $\{1,2,3,\ldots\}$ such that $\displaystyle\lim_{k \rightarrow \infty,\ k \in \mathcal{K}}\xh_k = x$. 
    Then, for any $(s,a)$ pair and $\delta >0$, 
\begin{equation}
\label{eq:uniformconv}
\liminf_{k \to \infty} \p{|\texttt{GAP}^a_{s, n_k}(\xh_k)-\mathcal{G}_{x})|> \delta}=0. 
\end{equation}
Given assumption \ref{(Aso)}, a sufficient condition to ensure \eqref{eq:uniformconv} is that a {\it uniform} Strong Law of Large Numbers (SLLN) holds. 
This means that, in the case of LHS, $\frac{1}{n}\sum_{i=1}^{n}f(x,\tilde{\xi}_{\mathcal{L}}^{i})$---and, in the case of AV, $\frac{2}{n}\sum_{i=1}^{n/2}g_{\mathcal{A}}(x,\tilde{\xi}^{i})$---converges to $\e{f(x,\tilde{\xi})}$ uniformly in $X$.

The uniform SLLN can be obtained for stochastic programs satisfying \ref{(Aso)} by invoking Proposition 7 of \citet{shapiro2003monte}. 
Conditions \ref{(Aso)}\ref{Aso1}--\ref{Aso3} satisfy the requirements of this proposition, but in the proof of that proposition, we must use a pointwise SLLN for AV and LHS instead of IID. 
Pointwise SLLN holds for AV by considering two IID subsets  $\left\lbrace \tilde{\xi}^1_{\mathcal{A}},\ldots,\tilde{\xi}^{n/2}_{\mathcal{A}}\right\rbrace$ and $\left\lbrace \tilde{\xi}^{1}_{\mathcal{A}'},\ldots,\tilde{\xi}^{n/2}_{\mathcal{A}'}\right\rbrace$, 
and it holds for LHS under the boundedness assumption of \ref{(Aso)}\ref{Aso2} \citep{Loh1996Latin}.

The consistency of the variance estimator is slightly different.  
To see this, observe that the optimality gap of $\xh \in X$ can be written as $\mathcal{G}_{\xh} =\e{f(\xh,\tilde{\xi}) - f(x^*, \tilde{\xi})}$ for any $x^* \in X^*$. 
The value of $\mathcal{G}_{\xh}$ remains the same for {\it all} optimal solutions $x^* \in X^*$ for a given $\xh \in X$. 
However, the variance $\var{f(\xh,\tilde{\xi}) - f(x^*, \tilde{\xi})}$ might change for different optimal solutions $x^* \in X^{*}$. 
To correctly handle the consistency of the sample variance estimators, for a given $\xh \in X$, define 
\begin{equation}  
\sigma^2_{s}(\xh)=\left\{
    \begin{matrix}
\var{f(\xh,\tilde{\xi})-f(x^*_{s,\min},\tilde{\xi})} & \text{if }s=\mathcal{I}, \mathcal{L}
\\
\var{g_\mathcal{A}(\xh,\tilde{\xi})-g_\mathcal{A}(x^*_{\mathcal{A},\min},\tilde{\xi})} & \text{if }s=\mathcal{A}, 
            \label{eq: true}
    \end{matrix}
\right.
\end{equation}
where $x^*_{s,\min} \in \arg\min_{y\in X^*}\var{f(\xh,\tilde{\xi})-f(y,\tilde{\xi})}$ if $s=\mathcal{I}, \mathcal{L}$ and $x^*_{\mathcal{A},\min} \in \arg\min_{y\in X^*}\var{g_\mathcal{A}(\xh,\tilde{\xi})-g_\mathcal{A}(y,\tilde{\xi})}$ if $s=\mathcal{A}$. 
Above, we suppress the dependence of $\xh$ on $x^*_{s,\min}$ for simplicity. 
Similarly, define $\sigma^2_{s,\max}(\xh)$ for each sampling method $s=\mathcal{I},\mathcal{A},\mathcal{L}$.
For brevity, we only formally define it for IID and LHS:  
\begin{equation}
    \label{eq:sigma_max}
    \sigma^2_{s,\max}(\xh)=\var{f(\xh,\tilde{\xi})-f(x^*_{s,\max},\tilde{\xi})}, 
\end{equation}
where $x^*_{s,\max} {\in} \mathrm{argmax}_{y\in X^*}\var{f(\xh,\tilde{\xi})-f(y,\tilde{\xi})}$ for $s=\mathcal{I},\mathcal{L}$. 
The variance  $\sigma^2_{\mathcal{A},\max}(\xh)$ for AV is defined similarly using the preceding construct. 

With the above definitions, under \ref{(Aso)}, the consistency of the sample variance estimator is given by 
\begin{equation}
\label{eq:sv}
\sigma^2_{s}(\xh) \le 
\lim \inf_{n\rightarrow \infty} \texttt{SV}^{a}_{s,n}(\xh)  \le \lim\sup_{n\rightarrow \infty} \texttt{SV}^{a}_{s,n}(\xh) \le 
\sigma^2_{s,\max}(\xh),
\end{equation}
w.p.1, for all $\xh \in X$, using any assessment procedure SRP or A2RP ($a=A,S$) and any sampling method $s=\mathcal{I}, \mathcal{A},\mathcal{L}$. 
When \ref{SP} has a unique optimal solution $X^{*}=\{x^{*}\}$, the above consistency result reverts to the usual case:  $\texttt{SV}^{a}_{s,n}(\xh) \rightarrow \sigma^2_{s}(\xh)$, w.p.1 as $n \rightarrow \infty$ for all $(s,a)$ pairs.  
Analyzing the sequential procedures requires a stronger version of consistency given in \eqref{eq:uniformconv} for the \texttt{GAP} estimators but only a regular version of consistency given in \eqref{eq:sv} for the \texttt{SV} estimators.

Before we present the theoretical results, we introduce another set of optimality gap estimators and discuss their properties. These will be used in the proofs. 
We refer to these estimators as `non-optimized' counterparts of the \texttt{GAP} estimators because the \texttt{GAP} estimators require optimization of an SAA problem (see the second term in \eqref{eq:gapsrp}), but the estimators \added{below} do not. Instead, they directly use an optimal solution. We will further discuss the differences between the \texttt{GAP} estimators and their non-optimized counterparts after their definition.

For SRP, given $\xh \in X$, let 
\begin{align}
\label{eq:Ds}
D^S_{s,n}(\xh)=\left\{\begin{matrix}
\frac{1}{n} \sum_{i=1}^{n}\left(f(\xh,\tilde{\xi}^i_s) - f(x^*_{s,\min}, \tilde{\xi}^i_s)\right)       & \text{if }s=\mathcal{I}, \mathcal{L}
\hspace{0.3cm}
\\ 
\frac{2}{n} \sum_{i=1}^{n/2}\left(g_\mathcal{A}(\xh,\tilde{\xi}^i) - g_\mathcal{A}(x^*_{\mathcal{A},\min}, \tilde{\xi}^i)\right) & \text{if }s=\mathcal{A},
\end{matrix}\right.
\end{align}
where above we use $\tilde{\xi}_{\mathcal{I}}\equiv \tilde{\xi}$ for notational convenience.
Recall that elsewhere we simply use $\tilde{\xi}$ within $f(\cdot,\tilde{\xi})$---instead of $\tilde{\xi}_{\mathcal{I}}$---for IID.
The A2RP non-optimized optimality gap estimator is simply the average of two independent SRP non-optimized estimators: 
$$D^A_{s,n}(\xh)=\frac{D^{S,1}_{s,n/2}(\xh)+D^{S,2}_{s,n/2}(\xh)}{2},\  s=\mathcal{I}, \mathcal{A}, \mathcal{L}.$$
For each $(s,a)$ pair, we assume the same set of observations are used to form both $D^a_{s,n}(\xh)$ and $\texttt{GAP}^a_{s,n}(\xh)$ for a fixed $\xh \in X$. 

Observe the differences between $\texttt{GAP}^{S}_{s,n}(\xh)$ given, e.g., in \eqref{eq:gapsrp} for the IID case and $D^S_{s,n}(\xh)$ given in \eqref{eq:Ds}:  
$\texttt{GAP}^{S}_{s,n}(\xh)$ uses an optimal solution to SAA $x^*_{s,n}$, which is a random variable because it depends on the whole sample.  
In contrast, $D^S_{s,n}(\xh)$ uses $x^*_{s,\min}$, which is an unknown but {\it fixed} (non-random) optimal solution. 
Because both $\xh$ and $x^*_{s,\min}$ are fixed, $D^S_{s,n}(\xh)$---unlike \texttt{GAP}---is a regular sample mean. 
As such, $D$ is unbiased for all $(s,a)$ pairs: $\e{D^{a}_{s,n}(\xh)}=\mathcal{G}_{\xh}$. 
Furthermore, for all  $\xh \in X$, $n \ge 2$, and $(s,a)$ pairs, we have\vspace*{-0.05in} 
\begin{equation}
\label{eq:GtoD}
\texttt{GAP}^{a}_{s,n}(\xh)\geq D^{a}_{s,n}(\xh), 
\end{equation}
w.p.1.
The above inequality is a consequence of  $x^*_{s,\min}$ being suboptimal (or at best optimal) to the SAA problem solved to form $\texttt{GAP}^{a}_{s,n}(\xh)$.
Let us illustrate by SRP IID; see \eqref{eq:gapsrp}. 
Because $z_{\mathcal{I},n}^*$ in \eqref{eq:gapsrp} is the optimal value, for any $y \in X$, we have $z_{\mathcal{I},n}^* \leq \frac{1}{n}\sum_{i=1}^{n} f(y,\tilde{\xi}^i)$. 
Replacing $y$ in this inequality 
by $x^*_{\mathcal{I}, \min} \in X^*$, we obtain \eqref{eq:GtoD} for $(s,a)={\mathcal{I},S}$. 
Other pairs follow by similar arguments with appropriate changes to notation.

Finally, the sequential proofs rely on CLT. 
That is, assuming finite and non-zero variances, for any $(s,a)$ pair and any $\xh \in X$,\vspace*{-0.05in} 
\begin{equation}
\label{eq:clt}
\left(D^{a}_{s,n}(\xh)-\mathcal{G}_{\xh}\right)\Big/\sqrt{\var{D^{a}_{s,n}(\xh)}} \Rightarrow N(0,1)\text{ as } n\to \infty,
\end{equation}
where $\Rightarrow$ denotes convergence in distribution.  
The above CLT holds for AV by reverting to the IID case, 
and it holds for LHS by the boundedness assumption \ref{(Aso)}\ref{Aso2}  \citep{owen1992central}. 

In \eqref{eq:clt}, we assumed finite, non-zero variances.
Finiteness of the variance term is guaranteed by \ref{(Aso)}\ref{Aso2}.
However, it may happen that $\var{D^{a}_{s,n}(\xh)} = 0, \forall n$. 
For instance, this could happen when \ref{SP} has a unique optimal solution $x^*$ and $\xh=x^*$. 
This yields $\mathcal{G}_x=\sigma^2_s(x)=0$ and $D^{a}_{s,n}(\xh)=0$, w.p.1 for all sampling methods. 
Or, in the case of LHS, if $f(\xh,\cdot)$ and its corresponding $f(x^*_{\mathcal{L},\min},\cdot)$  are additive functions---to be defined precisely in Section \ref{ssec:Asym}---we have $n\cdot\var{D^{a}_{\mathcal{L},n}(\xh)} \rightarrow 0$ as $n \rightarrow 0$. 
Then, a CLT in the form of \eqref{eq:clt} cannot be used for LHS. 
But $\sqrt{n}\left(D^{a}_{\mathcal{L},n}(\xh)-\mathcal{G}_{\xh}\right)$ converges in distribution, and hence in probability, to zero in this case \citep{stein_87}. 
Fortunately, our analysis remains unaffected by these `degenerate' cases; see, e.g., Step 2 of the proof in Section \ref{ssec:Asym}.

\subsection{Finite Stopping and Asymptotic Validity under Sublinear Sample Sizes} 
\label{ssec:Asym}

We first show conditions under which the sequential sampling procedures with AV and LHS stop in a finite number of iterations and produce asymptotically valid confidence intervals using the sample-size increases of Table \ref{tb: size}, with $c_p$ defined in Section \ref{ssec:overview}. 
Table \ref{tb: size} indicates that the sample sizes must be increased at least at a sublinear rate in the iteration number $k$, specifically of order $O(\ln^2 k)$. 
Similar results have been shown under an assumption that is not straightforward to establish for LHS (but relatively simple for IID) 
\citep{bayraksan2011sequential}.
So, we bypass this assumption and directly work with another assumption based on known properties of LHS large deviation probabilities. Proofs must be adjusted to accommodate LHS and this alternative assumption.

For any $\theta \in \mathbb{R},$ let\vspace*{-0.03in}
\begin{align*}
\Phi_{\mathcal{L}, n}(\xh,\theta)&=\e{\exp\left(\theta \sum^{n}_{i=1}\left(f(\xh,\tilde{\xi}^i_{\mathcal{L}})-\e{f(\xh,\tilde{\xi})}\right)\right)}, \text{ and} \\
\Phi_{\mathcal{I}, n}(\xh,\theta)&=\e{\exp\left(\theta\left(f(\xh,\tilde{\xi})-\e{f(\xh,\tilde{\xi})}\right)\right)}^n.
\end{align*}
In the first quantity $\Phi_{\mathcal{L}, n}(\xh,\theta)$, we could have equivalently used $\e{f(\xh,\tilde{\xi}^i_{\mathcal{L}})}$, $i=1,2,\ldots,n$. 
Because each observation in the LHS sample has distribution $P$ \citep{mckay1979comparison},  LHS produces unbiased estimators and
we have $\e{f(\xh,\tilde{\xi}^i_{\mathcal{L}})}=\e{f(\xh,\tilde{\xi})}$ for all $i=1,2,\ldots,n$.

Consider the following assumption for LHS:
\begin{description}[1cm]
\item[(A2)\label{(A2)}]  For all $\theta \in \mathbb{R}$, $\xh\in X$ and $n\geq 2$, $\Phi_{\mathcal{L}, n}(\xh,\theta)\leq \Phi_{\mathcal{I}, n}(\xh,\theta)$.  
\end{description}
Assumption \ref{(A2)} implies the upper bound on large deviation probabilities can go to zero faster for LHS than for IID by increasing $n$ \citep{drew2012some}.
That is, given $\xh \in X$, for any $l< 0 < u$, \ref{(A2)} implies
\begin{small}
\begin{align*}
\p{\frac{1}{n}\sum^{n}_{i=1}\left(f(\xh,\tilde{\xi}^i_{\mathcal{L}})-\e{f(\xh,\tilde{\xi})}\right) \leq l} &\leq \exp\left(-nI_{\mathcal{L},n}(\xh,l)\right)\leq \exp\left(-nI_{\mathcal{I}}(\xh,l)\right), 
\\
\p{\frac{1}{n}\sum^{n}_{i=1}\left(f(\xh,\tilde{\xi}^i_{\mathcal{L}})-\e{f(\xh,\tilde{\xi})}\right) \geq u} &\leq \exp\left(-nI_{\mathcal{L},n}(\xh,u)\right)\leq \exp\left(-nI_{\mathcal{I}}(\xh,u)\right),
\end{align*}
\end{small}%
where $I_{\mathcal{L},n}(\xh,z):=\sup_{\theta \in \mathbb{R}}\left[\theta z - \frac{1}{n}\log \Phi_{\mathcal{L},n}(\xh,\theta) \right]$ and $I_{\mathcal{I}}(\xh,z):=\sup_{\theta \in \mathbb{R}}\left[\theta z - \frac{1}{n}\log \Phi_{\mathcal{I}, n}(\xh,\theta) \right]$.
In the sequential sampling framework, we want to show deviation probabilities similar to above are bounded---but over all possible candidate solutions $\xh_k$ and iterations $k$ because both the candidate solution and the iteration when the procedures stop are random variables. 
We use specific values of $\theta$ for this purpose.
This can be seen through equations \eqref{ineq: P(A+B<C)}--\eqref{ineq: (A2)LHS} in the proof of Theorem \ref{thm: cp}.

When is assumption \ref{(A2)} satisfied? 
We narrow our focus to problems satisfying \ref{(Aso)} and provide some examples.\vspace*{-0.05in} 
\begin{enumerate}
\item First, if $d_{\xi}=1$ and for all $\xh \in X$ $f(\xh,\cdot)$ is continuous on $\Xi$ or the set of points at which $f(\xh,\cdot)$ is discontinuous has Lebesgue measure zero, \ref{(A2)} is satisfied. 
For instance, under mild conditions, a traditional single-product newsvendor problem fits this case.

\item When $d_{\xi}>1$, if for all $\xh \in X$ $f(\xh,\cdot)$ is an additive function---i.e., $f(\xh,\cdot)$ w.p.1 can be written as 
$f(\xh,\tilde{\xi})=K+\sum^{d_{\xi}}_{j=1}f_j(\xh,\tilde{\xi}_j)$, where $K$ is a constant and $f_j(\xh,\cdot)$, $j=1,\ldots, d_{\xi}$ are univariate functions---then $f$ satisfies \ref{(A2)} by extension of the one-dimensional case. Here, we assume $f_j(\xh,\cdot), j=1,\ldots,d_{\xi}$ are continuous or the set of points at which they are discontinuous has Lebesgue measure zero. 
This continuity assumption is satisfied, for instance, if $\tilde{\xi}_j$ are finite discrete random variables. 

\item Otherwise---when $d_{\xi}>1$ but $f$ is not additive---$f(\xh,\tilde{\xi})$ that is monotone in each component of $\tilde{\xi}$ (while other components are fixed) satisfies \ref{(A2)}.
Monotone functions preserve the negative dependence of LHS observations \citep{jin2003probabilistic}, and consequently \ref{(A2)} holds.
\end{enumerate}

A well-known problem class with monotone functions is two-stage stochastic linear programs with fixed recourse, given by\vspace*{-0.05in}
\begin{align}
&\min_{x} \left\lbrace cx+\e{h(x,\tilde{\xi})} : Ax=b, x\geq 0\right\rbrace, \label{SLP2}
\end{align}
where $h(x,\tilde{\xi})=\min_y \left\lbrace \tilde{q}y : Wy\leq \tilde{R}-\tilde{T}x, y \geq 0\right\rbrace$ and $\tilde{\xi}$ is comprised of random elements of $\tilde{q}, \tilde{R}$ and $\tilde{T}$.
The objective function of (\ref{SLP2}), written as $f(\xh,\tilde{\xi})= c\xh+{h(\xh,\tilde{\xi})}\text{ for each }\xh \in X$, is monotone in each component of $\tilde{\xi}$ because components of $\tilde{\xi}$ are independent, $W$ is non-random (fixed recourse), $\xh\geq 0, y\geq 0$, and the second-stage constraints $\left(Wy\leq \tilde{R}-\tilde{T}\xh\right)$ are inequalities \citep{higle1998variance,bailey1999response, drew2012some}.

When the random parameters of \ref{SP} are functions of $\tilde{\xi}$, the situation is more delicate. For simplicity, assume for the above problem (\ref{SLP2}) that $q$ and $T$ are fixed and components of vector $\tilde{R}$ are functions of $\tilde{\xi}$. We denote this as $R(\tilde{\xi})$. Then, $h(x,\tilde{\xi})$ is updated to $h(x,\tilde{\xi})=\min_y \big\{ qy : Wy\leq R(\tilde{\xi})-Tx,\ \ y \geq 0\big\}$.  
Instead of $R(\tilde{\xi})=\tilde{\xi}$ discussed above, suppose now $R(\tilde{\xi})$ is defined through $R_j(\tilde{\xi})=\gamma_j \tilde{\xi}_0 + \tilde{\xi}_j$ for each constraint $j=1,2,\ldots,m_2$, where $\tilde{\xi}_0, \tilde{\xi}_1, \tilde{\xi}_2,\ldots, \tilde{\xi}_{m_2}$ are independent random variables. Here, the dimension of $\tilde{\xi}$ is $d_{\xi}=m_2 +1$. 
If $\gamma_j > 0$ for all $j=1,2,\ldots,m_2$, increasing $\tilde{\xi}_0$ (while all other random parameters are fixed) results in all constraints to be more relaxed. This means that the optimal value either only decreases or stays the same. As a result, we have monotonicity in $\tilde{\xi}_0$. 
However, if some $\gamma_j$ were positive while others were negative, we couldn't have concluded monotonicity in $\tilde{\xi}_0$. This is because some constraints might become more relaxed while others become tighter, and so we cannot determine the direction of the optimal value. 

\smallskip 

The theorem below establishes finite stopping and asymptotic validity of the sequential procedures with AV and LHS. 

\begin{thm}\label{thm: cp}
	Suppose \ref{(A1)} and \ref{(Aso)} hold.
	For LHS, additionally suppose \ref{(A2)} holds.
	Let $\varepsilon > \varepsilon' > 0$, $h'>0$, $p > 0$, and $0 < \alpha < 1$ be fixed.
	Consider the sequential sampling procedure where the sample sizes are increased according to Table~\ref{tb: size}.  
	If the procedure stops at iteration $T$ according to (\ref{T}), then for all $(s, a)=(\mathcal{A}, S), (\mathcal{A}, A), (\mathcal{L}, S), (\mathcal{L}, A)$,
\begin{enumerate}
\item[(i)]  $P(T < \infty) = 1$,
\item[(ii)] $\liminf_{h \downarrow h'} \p{\mathcal{G}_{\xh_T} \leq h\sqrt{\texttt{SV}^{a}_{s,n_T}(\xh_T)} + \varepsilon} \geq  1- \alpha$.
\end{enumerate}
\end{thm}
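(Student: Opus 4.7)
The plan is to adapt the IID sequential-sampling proof of \cite{bayraksan2011sequential} to LHS and AV by substituting, in the two MGF-based steps, the LHS MGF bound guaranteed by \ref{(A2)} for LHS, and the IID MGF applied pair-by-pair for AV. The AV case reduces almost mechanically to the IID argument with effective sample size $n_k/2$---which is exactly the reason for the $n_k/2$ adjustment in Table \ref{tb: size}---while the LHS case is where the genuinely new work lies.

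For part (i), I would invoke \ref{(A1)} to obtain, w.p.1, a subsequence $\mathcal{K}$ along which $\xh_k \to x^{*} \in X^{*}$. The uniform SLLN---valid under \ref{(Aso)} combined with the pointwise SLLN (which holds for AV by the IID SLLN applied to each of the two halves of the pairs, and for LHS by the boundedness in \ref{(Aso)}\ref{Aso2} as in \cite{Loh1996Latin})---yields $\texttt{GAP}^{a}_{s,n_k}(\xh_k)\to \mathcal{G}_{x^{*}} = 0$ w.p.1 along $\mathcal{K}$. Since $\varepsilon' > 0$ and $\sqrt{\texttt{SV}^{a}_{s,n_k}(\xh_k)}\ge 0$, the stopping rule \eqref{T} is eventually triggered on $\mathcal{K}$, so $P(T<\infty) = 1$.

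For part (ii), I would union-bound over the iteration at which stopping occurs and then use \eqref{eq:GtoD} to pass from the optimized estimator $\texttt{GAP}$ to the unbiased non-optimized estimator $D$. Combining the stopping inequality $\texttt{GAP}^{a}_{s,n_k}\le h'\sqrt{\texttt{SV}^{a}_{s,n_k}}+\varepsilon'$ with the error inequality $\mathcal{G}_{\xh_k}> h\sqrt{\texttt{SV}^{a}_{s,n_k}}+\varepsilon$ gives
\begin{align*}
P\!\left(\mathcal{G}_{\xh_T}>h\sqrt{\texttt{SV}^{a}_{s,n_T}(\xh_T)}+\varepsilon\right)
\le \sum_{k=1}^{\infty} P\!\left(D^{a}_{s,n_k}(\xh_k)-\mathcal{G}_{\xh_k}<-\bigl[(h-h')\sqrt{\texttt{SV}^{a}_{s,n_k}(\xh_k)}+(\varepsilon-\varepsilon')\bigr]\right).
\end{align*}
I would then apply a Chernoff bound with a specific $\theta_k>0$ (exactly the kind of ``specific values of $\theta$'' flagged in the paragraph preceding the theorem). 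For LHS, conditioning on $\xh_k$ and applying \ref{(A2)} to the summands of $D^{a}_{\mathcal{L},n_k}(\xh_k)$ replaces the LHS MGF by the corresponding IID MGF; for AV, the MGF of $D^{a}_{\mathcal{A},n_k}(\xh_k)$ factors over the $n_k/2$ IID pairs. The boundedness in \ref{(Aso)}\ref{Aso2} plus Hoeffding's lemma then yields a sub-Gaussian MGF bound. With the sample sizes of Table \ref{tb: size} and the specific $c_p$ inherited from the IID analysis, the resulting sum is dominated by a series of the form $\sum_{k}k^{-p\ln k}$ scaled by $\exp(-c_p/2)$, from which the desired $\liminf_{h\downarrow h'}\ge 1-\alpha$ follows.

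I expect the main obstacle to be transferring \ref{(A2)}---which is stated for $f(\xh,\tilde{\xi})$---to the paired-difference summands $f(\xh_k,\tilde{\xi})-f(x^{*}_{\mathcal{L},\min},\tilde{\xi})$ that actually appear in $D^{a}_{\mathcal{L},n_k}(\xh_k)$. This is where LHS departs essentially from IID: the MGF bound has to be re-derived for the differenced integrand, using the same additivity/monotonicity structure that underpins the three concrete cases given for \ref{(A2)}. A secondary technicality is that $\xh_k$ and $x^{*}_{\mathcal{L},\min}$ are both random; this is handled by conditioning on the $\sigma$-algebra carrying candidate generation and exploiting the explicit assumption that the quality-assessment samples are independent of the candidate-generation samples. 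The AV case, by comparison, is bookkeeping: the factor of $\tfrac12$ in $g_{\mathcal{A}}$ and the halved effective sample size must be tracked inside Hoeffding's constant, but the structural argument is the IID one.
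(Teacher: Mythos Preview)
Your architecture and part (i) are fine; the gap is in how you plan to get \ref{(A2)} to act on the differenced integrand. You correctly spot that \ref{(A2)} is stated for $f(\xh,\cdot)$, not for $f(\xh_k,\cdot)-f(x^{*}_{\mathcal{L},\min},\cdot)$, but your proposed fix---re-deriving the MGF bound for the difference via ``the same additivity/monotonicity structure''---fails in the monotone case, which is the principal one. The difference of two coordinatewise-monotone functions need not be monotone in any coordinate, so the negative-dependence argument behind \ref{(A2)} collapses for the difference. The paper never applies \ref{(A2)} to the difference. Instead it splits the tail event via $P(A+B\le c)\le P(A\le c/2)+P(B\le c/2)$: the event $\{D^{S}_{\mathcal{L},n_k}(\xh_k)-\mathcal{G}_{\xh_k}\le -\Delta\varepsilon\}$ is bounded by a lower-tail event for the LHS sample mean of $f(\xh_k,\cdot)$ plus an upper-tail event for the LHS sample mean of $f(x^{*}_{\mathcal{L},\min},\cdot)$. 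Each piece now involves $f$ at a single point of $X$, so \ref{(A2)} applies directly (with $\theta=\pm\gamma/\sqrt{n_k}$), and Hoeffding's lemma on the bounded IID MGF finishes. This splitting is the missing idea; without it the LHS branch of your Step~1 does not go through.

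A smaller structural point: your sketch runs Chernoff/Hoeffding straight to the $\sum_k k^{-p\ln k}$ series and the $\alpha$ conclusion. The paper's argument is genuinely two-stage. Step~1 drops the $\sqrt{\texttt{SV}}$ term, keeps only $\Delta\varepsilon$, and uses Chernoff $+$ split $+$ \ref{(A2)} $+$ Hoeffding to show the series is \emph{finite} for all sufficiently small $\Delta h$; this only licenses Fatou's lemma. Step~2 then swaps $\limsup$ and $\sum$ via Fatou, and uses the LHS CLT \eqref{eq:clt} together with $\var{D^{S}_{\mathcal{L},n}}\le \sigma^2_{\mathcal{I}}/(n-1)$ and the consistency \eqref{eq:sv} of $\texttt{SV}$ to reach the $\sum_k k^{-p\ln k}$ series controlled by $c_p$. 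The Chernoff step alone does not produce $\alpha$; it only buys you the interchange.
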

\smallskip 

\begin{proof}
{\it (i)} Under \ref{(A1)}, the proof follows the proof of \cite[Proposition 1]{bayraksan2011sequential} by noting that \eqref{eq:uniformconv} holds for all $(s,a)$ pairs.

\noindent
{\it (ii)}
Let $\Delta h = h - h'$ and $\Delta \varepsilon = \varepsilon - \varepsilon'$. 
For any given $(s,a)$ pair, we have 
$\p{\mathcal{G}_{\xh_T} > h\sqrt{\texttt{SV}^{a}_{s,n_T}(\xh_T)} + \varepsilon} \leq \sum_{k=1}^{\infty} \p{D^{a}_{s,n_{k}}(\xh_{k}) - \mathcal{G}_{\xh_k} \leq -\Delta h \, \sqrt{\texttt{SV}^{a}_{s,n_{k}}(\xh_{k})} - \Delta \varepsilon}$ 
by using the stopping rule \eqref{T} and the inequality $\texttt{GAP}^{a}_{s,n}(\xh)\geq D^{a}_{s,n}(\xh)$ in \eqref{eq:GtoD}.
So, it suffices to show
\begin{align}\label{ineq: Fatou}
\limsup_{\Delta h \downarrow 0} \sum_{k=1}^{\infty} \p{D^{a}_{s,n_{k}}(\xh_{k}) - \mathcal{G}_{\xh_k} \leq  -\Delta h \, \sqrt{\texttt{SV}^{a}_{s,n_{k}}(\xh_{k})} - \Delta \varepsilon} \leq \alpha
\end{align}
for all  $(s, a)=(\mathcal{A}, S), (\mathcal{A}, A), (\mathcal{L}, S), (\mathcal{L}, A)$.
The rest of the proof consists of two main steps to show inequality (\ref{ineq: Fatou}). 
Step 1 shows boundedness of the infinite summation in (\ref{ineq: Fatou}) to apply Fatou's Lemma for sufficiently small $\Delta h$. 
Step 2 applies Fatou's Lemma, exchanging $\lim \sup$ and the infinite summation in \eqref{ineq: Fatou}, and verifies the asymptotic validity of the resulting confidence interval. 
Below, we provide a detailed proof for SRP LHS, i.e., $(s,a)=(\mathcal{L},S)$, and mention any necessary changes for the other pairs.
\medskip 

\noindent {\bf \underline{Step 1.}}
Consider the sequential procedure that uses SRP with LHS.
First, observe that although \ref{(A2)}---which will be used shortly---holds for $f$, an analogous result may not hold for $D$. 
To see this, suppose $f(\xh,\tilde{\xi})$ is monotone in each component of $\tilde{\xi}$ for all $\xh\in X$. 
Although $f(\xh,\tilde{\xi})$ is monotone, because the difference of monotone functions is not necessarily monotone, $D^{S}_{\mathcal{L},n}(\xh)$ may not be. 
Therefore, we split $D^{S}_{\mathcal{L},n}(\xh)$ into two and consider each (e.g., monotone) part separately through a probability inequality. 
For all $\gamma >0$, the left-hand side of \eqref{ineq: Fatou} is bounded above by 
\begin{small}
\begin{align}
&\sum_{k=1}^{\infty} \p{D^{S}_{\mathcal{L},n_{k}}(\xh_{k}) - \mathcal{G}_{\xh_k} \leq - \Delta \varepsilon}\nonumber\\
\leq&  \sum_{k=1}^{\infty} \int_{\xh_k} \p{\frac{1}{n_k}\sum^{n_k}_{i=1}f(\xh_k,\tilde{\xi}^i_{\mathcal{L}})-\e{f(\xh_k,\tilde{\xi})}\leq - \frac{\Delta \varepsilon}{2} \Bigg|\xh_k}dP_{\xh_k} 
\nonumber 
\\
&+\sum_{k=1}^{\infty}  \int_{\xh_k}  \p{\frac{1}{n_k}\sum^{n_k}_{i=1}f(x^*_{\mathcal{L},\min},\tilde{\xi}^i_{\mathcal{L}})-\e{f(x^*_{\mathcal{L},\min},\tilde{\xi})}\geq \frac{\Delta \varepsilon}{2}\Bigg|\xh_k}dP_{\xh_k}
\label{ineq: P(A+B<C)}
\\
\leq& \sum^{\infty}_{k=1}\int_{\xh_k}\e{\exp\left(-\frac{\gamma}{\sqrt{n_k}}\sum^{n_k}_{i=1}\left(f(\xh_k,\tilde{\xi}^i_{\mathcal{L}})-\e{f(\xh_k,\tilde{\xi})}\right)\right)\Bigg| \xh_k}dP_{\xh_k}\exp\left(-\gamma\frac{\Delta \varepsilon \sqrt{n_k}}{2}\right)   \nonumber 
\\
&+\sum^{\infty}_{k=1}\int_{\xh_k}\e{\exp\left(\frac{\gamma}{\sqrt{n_k}}\sum^{n_k}_{i=1}\left(f(x^*_{\mathcal{L},\min},\tilde{\xi}^i_{\mathcal{L}})-\e{f(x^*_{\mathcal{L},\min},\tilde{\xi})}\right)\right)\Bigg| \xh_k}dP_{\xh_k}\exp\left(-\gamma\frac{\Delta \varepsilon \sqrt{n_k}}{2}\right)  \label{ineq: chernoff1} 
\\
\leq&  \sum^{\infty}_{k=1}\sup_{x \in X}\Phi_{\mathcal{L}, n_k}\left(x,-\frac{\gamma}{\sqrt{n_k}}\right)\exp\left(-\gamma\frac{\Delta \varepsilon \sqrt{n_k}}{2}\right)+\sum^{\infty}_{k=1}\sup_{x \in X}\Phi_{\mathcal{L}, n_k}\left(x,\frac{\gamma}{\sqrt{n_k}}\right)\exp\left(-\gamma\frac{\Delta \varepsilon \sqrt{n_k}}{2}\right)  \label{ineq: intP=1} 
\\
\leq& \sum^{\infty}_{k=1}\sup_{x \in X}\Phi_{\mathcal{I}, n_k}\left(x,-\frac{\gamma}{\sqrt{n_k}}\right)\exp\left(-\gamma\frac{\Delta \varepsilon \sqrt{n_k}}{2}\right)   + \sum^{\infty}_{k=1}\sup_{x \in X}\Phi_{\mathcal{I}, n_k}\left(x,\frac{\gamma}{\sqrt{n_k}}\right)\exp\left(-\gamma\frac{\Delta \varepsilon \sqrt{n_k}}{2}\right) \label{ineq: (A2)LHS} 
\\
\leq&  2\exp\left({2(\gamma C)^2}\right)\sum^{\infty}_{k=1}k^{-\gamma\left(\frac{\Delta \varepsilon \sqrt{p}}{\sqrt{2}\Delta h}\right)},  \label{ineq: boundedLHS}
\end{align}
\end{small}where $P_{\xh_k}$ denotes the distribution function of ${\xh_k}$. 

Inequality (\ref{ineq: P(A+B<C)}) is a consequence of $P(A + B \leq c) \leq P(\{A \leq c/2\} \cup \{B \leq c/2\}) \leq P(A \leq c/2) + P(B \leq c/2)$ for two random variables $A$ and $B$ and constant $c$. 
Inequality (\ref{ineq: chernoff1}) follows from an application of the Chernoff bound and splitting $n_k$ to two $\sqrt{n_k}$s.
Inequality (\ref{ineq: intP=1}) takes supremum over $X$ and leaves out $\int_{\xh_k}dP_{\xh_k}=1$.
Inequality (\ref{ineq: (A2)LHS}) invokes \ref{(A2)}.
An application of Hoeffding's lemma \citep{hoeffding1963probability} yields an upper bound on $\sup_{x \in X} \Phi_{\mathcal{I}, n}(x,\theta)$ with $\theta = \gamma/\sqrt{n}$ or $\theta = -\gamma/\sqrt{n}$ as $\exp\left(2(\gamma C)^2\right)$. 
Recall that $C$ is defined in \ref{(Aso)}\ref{Aso2}. 
Finally, the right-hand side of (\ref{ineq: boundedLHS}) is bounded for all sufficiently small $\Delta h$: $0<\Delta h<\gamma\frac{\Delta \varepsilon \sqrt{p}}{\sqrt{2}}$.

The proof of Step 1 for A2RP LHS follows from above because $\sum_{k=1}^{\infty} \p{D^{A}_{\mathcal{L},n_{k}}(\xh_{k}) - \mathcal{G}_{\xh_k} \leq - \Delta \varepsilon} \leq \sum_{k=1}^{\infty} \p{D^{S,1}_{\mathcal{L},{n_k/2}}(\xh_k)- \mathcal{G}_{\xh_k} \leq - \Delta \varepsilon}+\sum_{k=1}^{\infty} \p{D^{S,2}_{\mathcal{L},{n_k/2}}(\xh_k)- \mathcal{G}_{\xh_k} \leq - \Delta \varepsilon}$ by the same probability inequality used in (\ref{ineq: P(A+B<C)}).

For SRP AV, there is no need to split and one could use the fact that $g_\mathcal{A}(\xh,\tilde{\xi}^i) -g_\mathcal{A}(x^*_{\mathcal{A},\min}, \tilde{\xi}^i)$ is bounded by $4C$ and independent and identically distributed for all $i=1,\ldots,n_k/2$.
So, one could directly invoke Hoeffding's inequality after applying the Chernoff bound and taking supremum over $X$. 
The proof of AV with A2RP is essentially the same because two independent sets of $n_k/4$ AV pairs form a set of $n_k/2$ AV pairs.
\medskip

\noindent{\bf \underline{Step 2.}}  
For a given $(s,a)$ pair, suppose  $\var{D^{a}_{s,n_{k}}(\xh_{k})}=0$ for some $k$. Then $D^{a}_{s,n_{k}}(\xh_{k})=\mathcal{G}_{\xh_{k}}$, w.p.1.
In this case, the probability in (\ref{ineq: Fatou}) is zero, and this term does not contribute to the summation.
Therefore, we continue with positive-variance terms below. 
Consider first $(s,a)=(\mathcal{L},S)$. 
Suppose CLT for LHS in the form of \eqref{eq:clt} holds for the remaining terms. 
Then, the left-hand side of (\ref{ineq: Fatou}) is bounded above by
\begin{small}
\begin{align}
& \sum_{k=1}^{\infty} \limsup_{\Delta h \downarrow 0} \p{D^{S}_{\mathcal{L},n_{k}}(\xh_{k}) - \mathcal{G}_{\xh_k} \leq -\Delta h \, \sqrt{\texttt{SV}^{S}_{\mathcal{L},n_{k}}(\xh_{k})} } \label{ineq: fatous1}  
\\
\leq & \mathlarger{\sum_{k=1}^{\infty}} \limsup_{\Delta h \downarrow 0}  \mathlarger{\int_{\xh_k}} \p{ \left.
\frac{D^{S}_{\mathcal{L},n_{k}}(\xh_{k}) - \mathcal{G}_{\xh_k}}{\sqrt{\var{D^{S}_{\mathcal{L},n_{k}}(\xh_{k})}}}  \leq -\Delta h \sqrt{n_k-1} \frac{\sqrt{\texttt{SV}^{S}_{\mathcal{L},n_k}(\xh_k)}}{\sigma_{\mathcal{I}}(\xh_k)} 
\right|
\xh_k}
dP_{\xh_k} \label{ineq: lvar}
\\
\leq & \mathlarger{\sum_{k=1}^{\infty}\int_{\xh_k}}\limsup_{\Delta h \downarrow 0}  \p{  \left.
\frac{D^{S}_{\mathcal{L},n_{k}}(\xh_{k}) - \mathcal{G}_{\xh_k}}{\sqrt{\var{D^{S}_{\mathcal{L},n_{k}}(\xh_{k})}}}  \leq -\sqrt{c_p+2p\ln^2k} \frac{\sqrt{n_k-1}}{\sqrt{n_k}}\frac{\sqrt{\texttt{SV}^{S}_{\mathcal{L},n_k}(\xh_k)}}{\sigma_{\mathcal{I}}(\xh_k)} \right| 
\xh_k}dP_{\xh_k} \label{ineq: lfatous2}
\\
\leq& \alpha, \notag  
\end{align}
\end{small}where  (\ref{ineq: fatous1}) and (\ref{ineq: lfatous2}) invoke Fatou's Lemma. 
Inequality (\ref{ineq: lvar}) holds because the variance of an LHS sample mean with sample size $n_k$ is at most the variance of an IID sample mean with sample size $n_k - 1$:  $\var{D^{S}_{\mathcal{L},n_{k}}(\xh_{k})} \leq \frac{\sigma^2_{\mathcal{I}}(\xh_k)}{n_k-1}$ \citep{owen1998latin}. 
And, (\ref{ineq: lfatous2}) uses the $n_k$ inequality from Table~\ref{tb: size}.
The rest of the proof follows
 by noting that 
(i) $\Delta h \downarrow 0$ ensures $n_k \to \infty$,  
(ii) $\liminf_{\Delta h \downarrow 0} \big({\sqrt{\smash[b]{\texttt{SV}^{S}_{\mathcal{L},n_k}(\xh_k)}}}/{\sigma_{\mathcal{I}}(\xh_k)}\big)\geq 1$, w.p.1 by \eqref{eq: true} and \eqref{eq:sv}, 
(iii) by using asymptotic normality through \eqref{eq:clt}, combined with a known bound $\p{Z\geq t}\leq \frac{1}{\sqrt{2\pi}}\frac{\exp (-t^2/2)}{t}$ where $Z$ follows a standard normal distribution with $t>0$, 
 and (iv) from the definition of $c_p$.

Recall that CLT for LHS given in \eqref{eq:clt} cannot be used when $f(\xh_k , \cdot)$ and $f(x^*_{\mathcal{L},\min},\cdot)$ are additive functions; remember the discussion at the end of Section~\ref{ssec:prelim}. 
In this case, the proof can be modified to multiply each side of the inequality inside the probability in \eqref{ineq: fatous1} by $\sqrt{n_k}/\sigma_{\mathcal{I}}(\xh_k)$, continuing along the same lines, and using the fact that $\sqrt{n}\left(D^{S}_{\mathcal{L},n}(\xh)-\mathcal{G}_{\xh}\right)$ converges in probability to zero as $n \rightarrow \infty$ in this case \citep{stein_87}. 
This gives the probability to be zero, and again such terms do not contribute. 

For A2RP LHS, the proof is similar to above with modifications to (\ref{ineq: lvar}) and appropriate changes to notation.
In particular, for A2RP, $\var{D^{A}_{\mathcal{L},n_{k}}(\xh_{k})}=\frac{1}{2}\var{D^{S}_{\mathcal{L},n_{k}/2}(\xh_{k})} \leq \frac{1}{2}\frac{\sigma^2_{\mathcal{I}}(\xh_k)}{n_k/2-1}$.

The proof of Step 2 using AV with SRP/A2RP follows along the same lines but considers $n_k /2$ AV pairs as IID observations---e.g., $\var{D^{S}_{\mathcal{A},n_{k}}(\xh_{k})} = \frac{\sigma^2_{\mathcal{A}}(\xh_k)}{n_k / 2}$---and is therefore omitted. 
\end{proof}
\medskip 

Before we move forward, we present another set of conditions under which the sublinear sample sizes can be used within the LHS sequential sampling procedure. 
This set of conditions does not require assumption \ref{(A2)}, but, as we shall see shortly, it is more restrictive in other aspects.  
To begin, recall the partial ANOVA decomposition of $f(\xh,\tilde{\xi})$ for a given $\xh \in X$
\begin{equation*}
f(\xh,\tilde{\xi})=\e{f(\xh,\tilde{\xi})}+f_{add}(\xh,\tilde{\xi})+f_{resid}(\xh,\tilde{\xi}), 
\end{equation*}
where $f_{add}(\xh,\tilde{\xi})=\sum^{d_{\xi}}_{j=1}f_j(\xh,\tilde{\xi}_j)$ is the additive term and $f_{resid}$ is the residual term, which includes higher-order terms \citep{stein_87}. 

In the above ANOVA decomposition,  $f_j(\xh,\xi_j)=\e{f(\xh,\tilde{\xi})\big|\tilde{\xi}_j=\xi_j}-\e{f(\xh,\tilde{\xi})}$, and we have $\e{f_j(\xh,\tilde{\xi}_j)}=\e{f_{resid}(\xh,\tilde{\xi})}=0$.  
With  $f_j$ defined as above,  $\e{f(\xh,\tilde{\xi})}+f_{add}(\xh,\tilde{\xi})$ provides the best additive fit to $f(\xh,\tilde{\xi})$ in the sense that it minimizes $\e{\big(f(\xh,\tilde{\xi})-g_{add}(\xh,\tilde{\xi})\big)^2}$ among all additive functions $g_{add}(\xh,\tilde{\xi})$.
So, $f(\xh,\tilde{\xi})$ being additive is equivalent to having a zero residual term in its ANOVA decomposition, w.p.1.  
As discussed, additive functions satisfy \ref{(A2)} under certain continuity conditions in $\tilde{\xi}$.

When $f_{resid}$ is not w.p.1 zero and $f$ is not monotone, on the other hand, we cannot guarantee \ref{(A2)}. 
However, 
by \ref{(Aso)}\ref{Aso2}, we are guaranteed to have a bounded residual term $-\infty < -m \leq f_{resid}(\xh,\tilde{\xi}) \leq M <\infty $ for all $\xh \in X$, w.p.1 for some $m, M > 0$.
Consequently, $f(\xh,\tilde{\xi})-\e{f(\xh,\tilde{\xi})}$ is bounded below and above by additive functions $f_{add}(\xh,\tilde{\xi})-m$ and $f_{add}(\xh,\tilde{\xi})+M$. 
In this case,  our main results hold  under an additional condition on the difference of parameters $\varepsilon - \varepsilon' $.
\medskip

\begin{thm}  \label{thm:cpres}
	Suppose \ref{(A1)} and \ref{(Aso)} hold. 
	Let $m, M$ be two finite constants such that at least one of $m>0$ or $M>0$, and they give the range of the residual terms in ANOVA decompositions of $f$: $-m \leq \inf_{\xh \in X, \xi \in \Xi}f_{resid}(\xh,\xi) \leq \sup_{\xh \in X, \xi \in \Xi}f_{resid}(\xh,\xi) \leq M$. 
	Suppose the univariate functions $f_j(\xh,\cdot)$ in ANOVA decompositions are continuous or the set of points they are discontinuous has Lebesgue measure zero for all $\xh \in X$. 
	Let $\varepsilon > \varepsilon' > 0$, $p > 0$, and $0 < \alpha < 1$ be fixed. Suppose $\varepsilon - \varepsilon' > m+M$.
	Consider the sequential sampling procedure where the sample sizes are increased according to Table~\ref{tb: size}.  
	If the procedure stops at iteration $T$ according to (\ref{T}), then for $(s, a)=(\mathcal{L}, S), (\mathcal{L}, A)$,
\begin{enumerate}
\item[(i)]  $P(T < \infty) = 1$,
\item[(ii)] $\liminf_{h \downarrow h'} \p{\mathcal{G}_{\xh_T} \leq h\sqrt{\texttt{SV}^{a}_{s,n_T}(\xh_T)} + \varepsilon} \geq  1- \alpha$.
\end{enumerate}
\end{thm}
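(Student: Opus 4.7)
The plan is to mirror the two-step structure of the proof of Theorem \ref{thm: cp}, replacing the direct application of \ref{(A2)} to $f$ with a reduction to the additive part $f_{add}$ of the ANOVA decomposition. Part (i) requires no change: it uses only \ref{(A1)} and the uniform SLLN behind \eqref{eq:uniformconv}, both of which hold under the present hypotheses. In part (ii) I would treat only the SRP--LHS case explicitly, since the A2RP--LHS case reduces to two independent SRP--LHS copies at sample size $n_k/2$ via the same probability inequality used in the proof of Theorem \ref{thm: cp}.

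For part (ii), after bounding $\texttt{GAP}^S_{\mathcal{L},n_k}$ below by $D^S_{\mathcal{L},n_k}$ via \eqref{eq:GtoD} and reducing to (\ref{ineq: Fatou}), I would split each summand differently from the proof of Theorem \ref{thm: cp}. Using the probability inequality in (\ref{ineq: P(A+B<C)}) but with an \emph{asymmetric} split, I select $c_1, c_2 > 0$ with $c_1 + c_2 = \Delta \varepsilon$, $c_1 > m$, and $c_2 > M$; this is feasible precisely because $\Delta \varepsilon = \varepsilon - \varepsilon' > m + M$. This produces tail events $\{A(\xh_k) \leq -c_1\}$ and $\{B(x^*_{\mathcal{L},\min}) \geq c_2\}$ for the centered LHS averages $A, B$ of $f(\xh_k,\cdot)$ and $f(x^*_{\mathcal{L},\min},\cdot)$. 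Writing $f = \e{f} + f_{add} + f_{resid}$ with $-m \leq f_{resid} \leq M$ then yields the containments $\{A \leq -c_1\} \subseteq \{A_{add} \leq -(c_1 - m)\}$ and $\{B \geq c_2\} \subseteq \{B_{add} \geq c_2 - M\}$, with shifted thresholds that remain strictly on the correct side of zero.

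Because $f_{add}(\xh,\cdot)$ is additive with univariate pieces $f_j$ whose discontinuity sets have Lebesgue measure zero, the discussion of item 2 after \ref{(A2)} in Section \ref{ssec:Asym} shows that the LHS moment generating function of $f_{add}$ is dominated by its IID counterpart. I would then rerun (\ref{ineq: chernoff1})--(\ref{ineq: boundedLHS}) with $f_{add}$ in place of $f$, $\theta = \pm\gamma/\sqrt{n_k}$, and the shifted thresholds $c_1 - m$ and $c_2 - M$ in the Chernoff exponents. Hoeffding's lemma continues to bound the IID MGF of $f_{add}$ by $\exp(2(\gamma C')^2)$ for some constant $C'$ depending on $C$, $m$, and $M$ (since $|f_{add}| \leq |f| + |\e{f}| + |f_{resid}|$ is bounded under \ref{(Aso)}\ref{Aso2} and the residual bound). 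Using $\sqrt{n_k} \geq (\sqrt{2p}/\Delta h)\ln k$ then yields a summable bound of the form $\sum_k k^{-\gamma(c_1-m)\sqrt{2p}/\Delta h} + \sum_k k^{-\gamma(c_2-M)\sqrt{2p}/\Delta h}$ for all sufficiently small $\Delta h$, which is what is needed to apply Fatou's lemma.

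Step 2 is then a nearly verbatim repetition of Step 2 in the proof of Theorem \ref{thm: cp}: I invoke Fatou to exchange $\limsup_{\Delta h \downarrow 0}$ with the summation, use the LHS variance bound $\var{D^S_{\mathcal{L},n_k}(\xh_k)} \leq \sigma^2_\mathcal{I}(\xh_k)/(n_k-1)$, the CLT in \eqref{eq:clt}, the sample variance consistency \eqref{eq:sv}, and the definition of $c_p$. Degenerate terms---in particular the case where $f(\xh_k,\cdot)$ happens to be additive so the CLT in \eqref{eq:clt} fails, but $\sqrt{n_k}(D^S_{\mathcal{L},n_k}(\xh_k) - \mathcal{G}_{\xh_k}) \to 0$ in probability---contribute zero, exactly as in the proof of Theorem \ref{thm: cp}. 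The main obstacle is the asymmetric split in Step 1: this is where the sharp hypothesis $\varepsilon - \varepsilon' > m + M$ is consumed, and it is what keeps both shifted thresholds $-(c_1 - m)$ and $c_2 - M$ on the correct side of zero after absorbing the residual bound, thereby preserving the exponential decay that the rest of the argument relies on.
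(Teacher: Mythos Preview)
Your proposal is correct and shares the paper's core idea: reduce to the additive part of the ANOVA decomposition, absorb the bounded residual into the threshold via the hypothesis $\varepsilon-\varepsilon'>m+M$, and then rerun Step~1 of Theorem~\ref{thm: cp} on the additive function (for which \ref{(A2)} holds). Step~2 is indeed unchanged.

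The paper orders the operations differently, and this makes the argument somewhat cleaner. Rather than splitting $D^S_{\mathcal{L},n_k}(\xh_k)-\mathcal{G}_{\xh_k}$ into the two centered averages first and then decomposing each, the paper applies the ANOVA decomposition directly to $D$ itself, writing
\[
D^{a}_{s,n}(\xh)=\mathcal{G}_{\xh}+D^{a}_{s,n,add}(\xh)+D^{a}_{s,n,resid}(\xh),
\]
and observing that the residual of a \emph{difference} satisfies $-(m+M)\le D^{a}_{s,n,resid}(\xh)\le m+M$. This gives the single containment
\[
\bigl\{D^{a}_{s,n_k}(\xh_k)-\mathcal{G}_{\xh_k}\le -\Delta\varepsilon\bigr\}\subseteq\bigl\{D^{a}_{s,n_k,add}(\xh_k)\le -(\Delta\varepsilon-(m+M))\bigr\},
\]
after which one invokes Step~1 of Theorem~\ref{thm: cp} verbatim with the \emph{symmetric} split $(\Delta\varepsilon-(m+M))/2$, since the difference $f_{add}(\xh_k,\cdot)-f_{add}(x^*_{\mathcal{L},\min},\cdot)$ is again additive. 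Your asymmetric split with $c_1>m$, $c_2>M$ works, but it is an artifact of decomposing \emph{after} the two-term split; the paper's decompose-then-split order avoids it entirely. Both routes consume the hypothesis $\Delta\varepsilon>m+M$ at the same logical point and yield equivalent summability bounds.
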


\begin{proof}
We only mention differences to the proof of Theorem \ref{thm: cp}; so we only focus on part {\it (ii)}. 
Let $\Delta \varepsilon = \varepsilon - \varepsilon'$. 
For any  $\xh \in X$, $n \geq 2$, and $(s,a)$ pair, using the ANOVA decomposition of $f$, we can write $D^{a}_{s,n}(\xh)$ in a similar form:   $D^{a}_{s,n}(\xh) =\mathcal{G}_x+D^{a}_{s,n,add}(\xh)+D^{a}_{s,n,redis}(\xh)$. 
Here, $-(m+M)\leq D^{a}_{s,n,redis}(\xh)\leq m+M$ for all $\xh \in X$, w.p.1.  
Using this notation and the lower bound on $D^{a}_{s,n,redis}(\xh)$, we obtain
\begin{small}
\begin{equation}
\sum_{k=1}^{\infty} \p{D^{a}_{s,n_{k}}(\xh_{k}) - \mathcal{G}_{\xh_k} \leq - \Delta \varepsilon} \leq \sum_{k=1}^{\infty} \p{D^{a}_{s,n,add}(\xh_{k}) \leq -
\big( \Delta \varepsilon-(m+M) \big)
}.
\label{ineq: additive}
\end{equation}
\end{small}Because $D^{a}_{s,n,add}(\xh)$ is an additive function with $\e{D^{a}_{s,n,add}(\xh)}=0$ and $\Delta \varepsilon-(m+M)>0$, (\ref{ineq: additive}) is bounded above for all sufficiently small positive $\Delta h$ by Step 1 of Theorem \ref{thm: cp}'s proof.
Observe that if $\Delta \varepsilon-(m+M)\leq 0$, we cannot establish an analogous inequality (\ref{ineq: boundedLHS}), and the resulting infinite sum diverges. 
The rest of the proof is similar to that of Theorem \ref{thm: cp}. 
\end{proof}

Theorem \ref{thm:cpres} implies that if $m+M$ is sufficiently small---so $f$ is mostly additive---it may be reasonable to use the sublinear sample sizes of Table~\ref{tb: size}. 
However, when $m+M$ is large, the additional condition ${\varepsilon -\varepsilon'} > m+M$ may yield impractically large confidence intervals for many problems.  
Furthermore, the values of $m,M$ may not be known. 
Then, an alternative way discussed next can be used.

\subsection{Finite Stopping and Asymptotic Validity under Superlinear Sample Sizes}
\label{sssec:cpq}

We provide a variant of Theorem \ref{thm: cp} under less restrictive assumptions.
The price to pay for the less restrictive assumptions is that, now, the sample sizes must be increased at a higher rate. 
In particular, for a given $q>1$, the sample-size increases are updated to 
\begin{equation} \label{ineq: ss_cpq}
\begin{matrix}
\frac{n_k}{2} \geq \left(\frac{1}{h-h'}\right)^2(c_{p,q}+2pk^{q}) & \text{for $s=\mathcal{A}$,} & \text{ and } & {n_k} \geq \left(\frac{1}{h-h'}\right)^2(c_{p,q}+2pk^{q}) & \text{for $s=\mathcal{L}$,}
\end{matrix}
\end{equation}
where $p>0$ and $c_{p,q}=\max\left\lbrace 2\ln\left(\sum^{\infty}_{j=1}\exp(-pj^{q})/\sqrt{2\pi}\alpha\right),1\right\rbrace$.
For A2RP and procedures that use AV, we again assume the sample sizes obey the `even' and `multiple of four' requirements summarized in Table~\ref{tb: size}.
Unlike before, the above sample sizes are increased of order $O(k^q)$; so they are superlinear in the iteration number $k$.  
A typical value of $q=1.5$ is used in many applications.

For the less restrictive assumptions, we update the following:   
For LHS, we remove assumption \ref{(A2)}. 
For AV, we replace the boundedness assumption of \ref{(Aso)}\ref{Aso2} by the less restrictive finite second moment assumption of 
\begin{description}[1cm]
\item[(A3)\label{(A3)}] $\sup_{x \in X} E[f(x,\tilde{\xi})]^2 < \infty$.
\end{description}
We still need the boundedness condition \ref{(Aso)}\ref{Aso2} for LHS to invoke CLT for LHS.
Of course, if \ref{(Aso)}\ref{Aso2} is satisfied, \ref{(A3)} holds.
The following theorem establishes asymptotic validity of AV and LHS under the above less restrictive assumptions.
\medskip 

\begin{thm}\label{thm: cpq}
	Suppose \ref{(A1)} and \ref{(Aso)} hold, 
where for AV \ref{(Aso)}\ref{Aso2} is replaced by \ref{(A3)}.
Let $\varepsilon > \varepsilon' > 0$, $p > 0$, $q > 1$, and $0 < \alpha < 1$ be fixed.
Consider the sequential sampling procedure where the sample sizes are increased according to (\ref{ineq: ss_cpq}).
If the procedure stops at iteration $T$ according to (\ref{T}), then for all  $(s, a)=(\mathcal{A}, S), (\mathcal{A}, A), (\mathcal{L}, S), (\mathcal{L}, A)$,
\begin{enumerate}
\item[(i)]  $P(T < \infty) = 1$,
\item[(ii)] $\liminf_{h \downarrow h'} \p{\mathcal{G}_{\xh_T} \leq h\sqrt{\texttt{SV}^{a}_{s,n_T}(\xh_T)} + \varepsilon} \geq  1- \alpha$.
\end{enumerate}
\end{thm}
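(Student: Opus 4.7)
My plan is to mirror the two-step structure of the proof of Theorem~\ref{thm: cp}, modifying Step~1 to dispense with \ref{(A2)} for LHS and with full boundedness for AV, and leaving Step~2 essentially intact once the superlinear schedule (\ref{ineq: ss_cpq}) and the constant $c_{p,q}$ are substituted in. Part $(i)$ carries over verbatim from Theorem~\ref{thm: cp}$(i)$: it uses only \ref{(A1)} and \eqref{eq:uniformconv}, and \eqref{eq:uniformconv} continues to hold here because a uniform SLLN is still available---for LHS under the boundedness \ref{(Aso)}\ref{Aso2}, and for AV under \ref{(A3)} via the two IID sub-samples.

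For part $(ii)$, I would again use (\ref{eq:GtoD}) and the stopping rule (\ref{T}) to reduce the claim to
\[
\limsup_{\Delta h \downarrow 0}\ \sum_{k=1}^{\infty} \p{D^{a}_{s,n_k}(\xh_k) - \mathcal{G}_{\xh_k} \leq -\Delta h\,\sqrt{\texttt{SV}^{a}_{s,n_k}(\xh_k)} - \Delta\varepsilon} \leq \alpha,
\]
exactly as in (\ref{ineq: Fatou}). The substantive change relative to Theorem~\ref{thm: cp} lies in Step~1. Because \ref{(A2)} is no longer available, the Chernoff/Hoeffding route used in (\ref{ineq: chernoff1})--(\ref{ineq: boundedLHS}) cannot be invoked for LHS; the same is true for AV under only \ref{(A3)}. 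I would replace it with a conditional Chebyshev argument. Dropping the $\Delta h$ term, each summand is bounded by $\p{D^{a}_{s,n_k}(\xh_k) - \mathcal{G}_{\xh_k} \leq -\Delta\varepsilon}$, and conditioning on $\xh_k$ Chebyshev combined with the LHS variance bound $\var{D^{S}_{\mathcal{L},n_k}(\xh_k) \mid \xh_k} \leq \sigma^2_{\mathcal{I}}(\xh_k)/(n_k-1)$ and the AV identity $\var{D^{S}_{\mathcal{A},n_k}(\xh_k) \mid \xh_k} = \sigma^2_{\mathcal{A}}(\xh_k)/(n_k/2)$ yields a bound of order $1/n_k$, with the proportionality constant uniform on $X$ by \ref{(Aso)}\ref{Aso2} for LHS and by \ref{(A3)} for AV. Integrating over $P_{\xh_k}$ and summing, the schedule (\ref{ineq: ss_cpq}) gives $n_k \geq \mathrm{const}\cdot k^q$, so the tail sum is $O(\sum_k k^{-q}) < \infty$ for $q>1$. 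A2RP is handled by the same probability split as in (\ref{ineq: P(A+B<C)}), applied before Chebyshev so that the independence of the two halves of the sample can be used.

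Step~2 then proceeds essentially as in Theorem~\ref{thm: cp}$(ii)$: Fatou's lemma pushes $\limsup_{\Delta h \downarrow 0}$ inside the sum, normalization by $\sqrt{\var{D^{a}_{s,n_k}(\xh_k)}}$ together with the variance-ratio asymptotics $\liminf \texttt{SV}/\sigma^2_{\mathcal{I}} \geq 1$ from (\ref{eq: true})--(\ref{eq:sv}) rewrites each probability in CLT form, and (\ref{eq:clt})---still valid for LHS under boundedness and for AV under \ref{(A3)}---combined with the Gaussian tail bound $\p{Z \leq -t} \leq (\sqrt{2\pi}\,t)^{-1}\exp(-t^2/2)$ produces a summand dominated by a constant multiple of $\exp(-c_{p,q}/2)\exp(-pk^q)$. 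The definition $c_{p,q} = \max\{2\ln(\sum_{j\geq 1} e^{-pj^q}/(\sqrt{2\pi}\alpha)),1\}$ is calibrated exactly so that the resulting total is at most $\alpha$. Degenerate zero-variance terms for LHS (additive case) contribute zero, exactly as in the corresponding paragraph of Step~2 in Theorem~\ref{thm: cp}.

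The main obstacle I anticipate is Step~1: the Chebyshev bound is only polynomial in $n_k$, so the argument survives only because the schedule is upgraded from $O(\ln^2 k)$ to $O(k^q)$. Care must be taken to ensure the variance bounds are genuinely uniform in $\xh_k$ after conditioning, so that integration against $P_{\xh_k}$ preserves summability, and that the A2RP split is applied before Chebyshev so that the two independent halves can be exploited separately.
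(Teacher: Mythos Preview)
Your proposal is correct and matches the paper's proof essentially line for line: the paper likewise leaves part~(i) and Step~2 of part~(ii) unchanged from Theorem~\ref{thm: cp}, and replaces Step~1 by squaring and applying Markov's inequality (i.e., Chebyshev) together with the LHS variance bound $\var{D^{S}_{\mathcal{L},n_k}(\xh_k)\mid \xh_k}\le \sigma^2_{\mathcal{I}}(\xh_k)/(n_k-1)$, yielding a tail dominated by $\sup_{x\in X}\sigma^2_{\mathcal{I},\max}(x)\,\Delta\varepsilon^{-2}\sum_k 1/(n_k-1)$, which converges under the superlinear schedule~(\ref{ineq: ss_cpq}). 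The only cosmetic difference is that for A2RP the paper applies Chebyshev directly to $D^A$ (using independence of the two halves to compute $\var{D^A}$), whereas you split first via the union bound; both work.
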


\begin{proof}
The proof follows the same steps of the proof of Theorem \ref{thm: cp} except Step 1 of part {\it (ii)}.
Let $(s,a)=(\mathcal{L}, S)$.
Step 1 of Theorem \ref{thm: cp} is changed starting from (\ref{ineq: P(A+B<C)}):
\begin{small}
\begin{align}
\sum_{k=1}^{\infty} \p{D^{S}_{\mathcal{L},n_{k}}(\xh_{k}) - \mathcal{G}_{\xh_k} \leq - \Delta \varepsilon} \leq&\sum_{k=1}^{\infty} \int_{\xh_k}\p{\left(D^{S}_{\mathcal{L},n_{k}}(\xh_{k}) - \mathcal{G}_{\xh_k}\right)^2 \geq  \Delta \varepsilon^2\Bigg|\xh_k}dP_{\xh_k}\nonumber\\
\leq&\sum_{k=1}^{\infty} \int_{\xh_k}\e{\left(D^{S}_{\mathcal{L},n_{k}}(\xh_{k}) - \mathcal{G}_{\xh_k}\right)^2  \Bigg|\xh_k}dP_{\xh_k} \Delta \varepsilon^{-2} \label{ineq: MI}\\
\leq&\sum_{k=1}^{\infty} \int_{\xh_k}\frac{1}{n_k-1}\var{f(\xh_k,\tilde{\xi})-f(x^*_{\mathcal{I},\min},\tilde{\xi})\Bigg|\xh_k}dP_{\xh_k} \Delta \varepsilon^{-2} \label{ineq: cpq_owen}\\
\leq&\sup_{x \in X}\left\lbrace\sigma^2_{\mathcal{I},\max}(x) \right\rbrace \Delta \varepsilon^{-2} \sum_{k=1}^{\infty}\frac{1}{n_k-1},\label{ineq: cpq_bd}
\end{align}
\end{small}where (\ref{ineq: MI}) follows from Markov's inequality, and (\ref{ineq: cpq_owen}) holds by using a conditional version of the fact  $\var{D^{S}_{\mathcal{L},n}(x)} \leq \frac{\sigma^2_{\mathcal{I}}(x)}{n-1}$ \citep{owen1998latin}.
The right-hand side of (\ref{ineq: cpq_bd}) is bounded because (i) $\sup_{x \in X}\left\lbrace\sigma^2_{\mathcal{I},\max}(x) \right\rbrace $ is bounded by \ref{(A3)}, and (ii) the infinite sum is bounded by $n_k \geq 2$ and (\ref{ineq: ss_cpq}).
The proof of LHS with A2RP follows from above, and the proofs with AV follow with minor modifications to notation and the variance term as discussed before. 
\end{proof}

The above analysis showed the validity of the resulting confidence intervals asymptotically, i.e., as $h \downarrow h'$ or equivalently as $n_k \rightarrow \infty$. 
Even in much simpler settings of forming confidence intervals on  population means, validity of sequential confidence intervals have  been shown  asymptotically \citep{chow1965asymptotic,nadas1969extension}.  But they have been successfully used in many real-world applications.  
Furthermore, due to the nature of the optimality gap estimators---which contain differences of correlated random variables, where the correlations occur because of the dependence of optimal solutions $x_{s,n}^*$ to all the sample points---it is not straightforward to establish conditions on \ref{SP} under which AV and LHS guarantee variance reduction. 
Nevertheless, these sampling methods have been empirically observed to reduce both variance and bias for many problems. 
Therefore, 
in the next section, we conduct numerical experiments to test the small-sample behavior of the procedures and the effectiveness of AV and LHS 
in the sequential setting.  
\vspace*{-0.1in}

\section{Computational Experiments and Further Analysis}
\label{sec:exp}
\vspace*{-0.05in}

\subsection{Test Problems}
\label{ssec: tp}
\vspace*{-0.05in}

We performed experiments on eight test problems from the literature.
Table \ref{tb: tp} summarizes the characteristics of these problems. 
All test problems are two-stage stochastic linear programs with recourse, and they all satisfy \ref{(Aso)}.
The first three problems in Table \ref{tb: tp}---APL1P \citep{infanger1992monte}, PGP2 \citep{higle1996stochastic} and LandS3 \citep{sen2014mitigating}---are power generation and expansion problems.
They can be solved to optimality; so, we can calculate optimality gaps and  variances exactly.
They allow us to fully analyze the results.

The next five problems are large scale, and they have no known exact optimal solutions. 
DB1 \citep{donohue1995upper, mak1999monte} and 20TERM \citep{mak1999monte}  are vehicle allocation problems. 
SSN \citep{sen1994network} is a large-scale telecommunications network optimization problem, and STORM \citep{mulvey1995new} is an aircraft scheduling problem.  
All the aforementioned problems satisfy the monotonicity property.
Therefore, we use the smaller sample-size formula of Section \ref{ssec:Asym}.
On the other hand, BAA99-20 \citep{sen2014mitigating}---a twenty-product version of a two-product inventory model of  \citet{bassok1999single}---does not satisfy the monotonicity property, and we use the larger sample-size formula of Section \ref{sssec:cpq}; see column `Case' of Table \ref{tb: tp}.

\begin{table}[tbh]
\centering
\caption{Characteristics of Test Problems}
\label{tb: tp}
{\small
\begin{tabular}{c|cccc|c}  \toprule
Problem  & Application            & \begin{tabular}[c]{@{}c@{}}1st-/2nd-stage\\ \# of variables\end{tabular} & \begin{tabular}[c]{@{}c@{}}\# of Random\\ Parameters\end{tabular} & \begin{tabular}[c]{@{}c@{}}\# of Total\\ Scenarios\end{tabular} & Case                                                                     \\ \toprule
APL1P    & power expansion        & 2/9                                                                    & 5                                                                 & 1,280                                                           & \multirow{6}{*}{\begin{tabular}[c]{@{}c@{}} \S\ref{ssec:Asym} \end{tabular}} \\ 
PGP2     & power generation       & 4/16                                                                   & 3                                                                 & 576                                                             &                                                                          \\
LandS3   & power generation       & 4/12                                                                   & 3                                                                 & $10^6$                                                          &                                                                          \\ 
DB1      & vehicle allocation     & 5/102                                                                  & 46                                                                & $4.5\times 10^{25}$                                             &                                                                          \\
SSN      & telecommunications     & 89/706                                                                 & 86                                                                & $10^{70}$                                                       &                                                                          \\
STORM    & air freight scheduling & 121/1259                                                               & 118                                                               & $5^{118}$                                                       &                                                                          \\
20TERM   & vehicle allocation     & 63/764                                                                 & 40                                                                & $1.1\times 10^{12}$                                             &                                                                          \\ \toprule
BAA99-20 & inventory model        & 20/250                                                                 & 20                                                                & $10^{34}$                                                       & \S\ref{sssec:cpq}                                                                    \\ \bottomrule
\end{tabular}}
\end{table}

The different characteristics of these test problems allow us to experiment under varying conditions. 
Both PGP2 and DB1 appear to have flat objective function values.  
They tend to produce a small number of optimal solutions to the SAA problem. 
When a candidate solution that is not optimal coincides with one of these solutions, then optimality gap estimators incorrectly estimate zero optimality gaps. 
This reduces the effectiveness of the quality assessment procedures for these two problems. 
STORM, on the other hand, produces a near-optimal solution with a relatively small number of scenarios.  
Consequently, STORM is not expected to  have large bias, and it allows us to test our results under low bias. 
In contrast, it is harder to obtain optimal solutions to SNN even with large sample sizes, allowing us to conduct tests when there may be high bias.  
As mentioned, we use BAA99-20 to test when monotonicity cannot be verified.  
\vspace*{-0.09in}

\subsection{Experimental Setup}
\label{ssec: es}
\vspace*{-0.03in}

Sequential sampling procedures need several input parameters. 
We set $\alpha=0.10$, $\varepsilon =2 \times 10^{-7}$ and $\varepsilon'= 1 \times 10^{-7}$ for all problems and procedures. 
For the smaller sample-size formula of  Section \ref{ssec:Asym}, we use $p=1.91 \times 10^{-1}$ and  $c_p=8.146$.
For the larger sample size of Section \ref{sssec:cpq}, we use $p=4.67 \times 10^{-3}$, $q=1.5$, and $c_{p,q}=9.69945$.
Next, we set $n_k$ as the smallest integer that is a multiple of four satisfying the AV formulas in Table \ref{tb: size} and (\ref{ineq: ss_cpq}).
This way, all procedures use the same number of observations $n_k$ at each iteration $k$ for a given test problem.
Hence, we can evaluate the procedures under similar conditions.

A desired initial sample size $n_1$ dictates a value of $\Delta h=h-h'$.
For PGP2, we use $n_1=100$. For the other small-scale problems, we set $n_1=200$, and for all large-scale problems, we use $n_1=500$.
Given $\Delta h$, we need to select $h'$---an important component of stopping rule ($\ref{T}$).
For a problem, we perform 25 replications for each procedure with the desired $n_1$ for 5 iterations ignoring the stopping rule ($\ref{T}$).
Then, $h'$ is estimated as $h'=0.8\times\left(\texttt{GAP}^a_{s, avg}/ \sqrt{\texttt{SV}^a_{s,avg}}\right)$, where $\texttt{GAP}^a_{s,avg}$ and ${\texttt{SV}^a_{s,avg}}$ are the average of 25 $\texttt{GAP}^a_s(\xh_5)$ and $\texttt{SV}^a_s(\xh_5)$ estimates, respectively, for a given ($s,a$) pair.
The factor $0.8$ aims to make the stopping rule ($\ref{T}$) tighter.

Table \ref{tb: para} summarizes $\Delta h$, $n_1$, and $h'$ for all problems and procedures.
Generally, $h'$ of AV is the largest among $s=\mathcal{I}, \mathcal{A}, \mathcal{L}$ for a fixed $a=S, A$.
This is because AV tends to have a smaller sample variance.
We will discuss this issue further soon. 
Similarly, $h'$ of A2RP estimators are larger than their SRP counterparts for each $s=\mathcal{I}, \mathcal{A}, \mathcal{L}$.
This is because A2RP \texttt{GAP} estimators tend to be more conservative.

\begin{table}[tbh]
	\centering
	\caption{Estimated $h'$ for a Given $\Delta h$} 
	\label{tb: para}
	\begin{threeparttable}
	{\small 
		\begin{tabular}{c|cc|rrrrrr} \toprule
			& \multirow{2}{*}{$\Delta h$}           &     \multirow{2}{*}{$n_1$}        & \multicolumn{6}{c}{$h'$ for ($s,a$)\tnote{1}\ =\ }  \\ 
			&  &      & $(\mathcal{I},S)$ & $(\mathcal{A},S)$ & $(\mathcal{L},S)$ & $(\mathcal{I},A)$ & $(\mathcal{A},A)$ & $(\mathcal{L},A)$ \\ \toprule
			APL1P    & 0.2855   & 200 & 0.036             & 0.053             & 0.020             & 0.049             & 0.085             & 0.029             \\
			PGP2     & 0.4039   & 100  & 0.073             & 0.105             & 0.084             & 0.135             & 0.204             & 0.129             \\
			LandS3   & 0.2855   & 200 & 0.047             & 0.064             & 0.033             & 0.067             & 0.076             & 0.058             \\
			DB1      & 0.1806   & 500 & 0.025             & 0.042             & 0.027             & 0.046             & 0.038             & 0.027             \\
			SSN      & 0.1806   & 500 & 0.087             & 0.118             & 0.057             & 0.125             & 0.175             & 0.112             \\
			STORM    & 0.1806   & 500 & 0.032             & 0.042             & 0.012             & 0.048             & 0.053             & 0.039             \\
			20TERM   & 0.1806   & 500 & 0.025             & 0.040             & 0.022             & 0.034             & 0.057             & 0.031             \\
			BAA99-20 & 0.1971   & 500 & 0.077	&	0.118	&	0.070	&	0.104	&	0.156	&	0.090            \\ \bottomrule
		\end{tabular}
		}
		\begin{tablenotes}
			\item[1] \footnotesize{$h'$ of $(2\mathcal{I},a)$ is set to $\sqrt{2}h'$ of  $(\mathcal{I},a)$ for $a=S,A$; e.g., for APL1P,
			$h'$ of $(2\mathcal{I},S) = \sqrt{2}\times0.036 = 0.051$ 
			}
		\end{tablenotes}
	\end{threeparttable}
\end{table}

We performed 300 replications for the small-scale problems and 100 replications for the large-scale problems.
To further evaluate the procedures under similar conditions, each independent replication used the same stream of (i) pseudo-random numbers and (ii) candidate solutions $\xh_k$,  $k=1,2,\ldots$ across the procedures.
The candidate solutions $\xh_k$ were generated independently by solving SAAs with $n_k$ IID observations.  At each iteration $k$, the solution-generating SAAs used new $n_{k}$ IID observations. 
Finally, because LHS must generate a new LHS sample every time the sample size is increased, all variants generate new samples when assessing solution quality.  

All experiments were performed on the Oakley cluster of \citet{OhioSupercomputerCenter1987}, which consists of 8,328 total cores, Intel Xeon $\times$5650 CPUs and  4.0 gigabytes memory per core.
The problems were solved in a non-parallel fashion with the regularized decomposition algorithm of \citet{ruszczynski1986regularized} and \citet{ruszczynski1997Accelerating} written in C\texttt{++}.
We used Mersenne Twister \citep{Matsumoto1998Mersenne} random number generator for all experiments.
\vspace*{-0.09in}

\subsection{Results and Analysis}
\label{sssec: Comparison}

Before we present the results of our computational experiments, let us first examine the {variances} $\sigma^2_{s}(\xh)$ defined in \eqref{eq: true} for the three sampling methods $s=\mathcal{I}, \mathcal{A}, \mathcal{L}$ in more detail.
Let $\xh \in X$ be given, and suppose all variances are finite. 
If \ref{SP} has a unique optimal solution, i.e., $X^*$ is a singleton set, then $x^*_{\mathcal{A},\min}$ and  $x^*_{\mathcal{I},\min}$, defined right after equation \eqref{eq: true}, are both equal to the optimal solution $x^*$ and  
\begin{align}
	\sigma^2_{\mathcal{A}}(\xh) & = \var{\frac{1}{2}\left[f(\xh,\tilde{\xi}_{\mathcal{A}})
- f(x^*_{\mathcal{A},\min}, \tilde{\xi}_{\mathcal{A}})
+f(\xh,\tilde{\xi}_{\mathcal{A}'}) - f(x^*_{\mathcal{A},\min}, \tilde{\xi}_{\mathcal{A}'})\right]} \nonumber
	\\
	& = \frac{1}{2}\var{f(\xh,\tilde{\xi}) - f(x^*, \tilde{\xi})}
+ \frac{1}{2}\cov{f(\xh,\tilde{\xi}_{\mathcal{A}}) - f(x^*, \tilde{\xi}_{\mathcal{A}}), f(\xh,\tilde{\xi}_{\mathcal{A}'})- f(x^*, \tilde{\xi}_{\mathcal{A}'})} \label{eq:longavvar}
	\\
	& \leq \frac{1}{2}\sigma^2_{\mathcal{I}}(\xh)+\frac{1}{2}\sigma^2_{\mathcal{I}}(\xh).  \label{ineq: cauchy1}
\end{align}
The second term on the right-hand side of inequality (\ref{ineq: cauchy1}) follows from Cauchy-Schwarz inequality. 
So, when there is a unique optimal solution, we expect the variance of AV to be at most as large as those of IID and LHS:  $\sigma^2_{\mathcal{A}}(\xh)\leq \sigma^2_{\mathcal{I}}(\xh)=\sigma^2_{\mathcal{L}}(\xh)$.  
On the other hand, if \ref{SP} has multiple optimal solutions,
by a similar reasoning we obtain $\sigma^2_{\mathcal{A}}(\xh)\leq \sigma^2_{\mathcal{I},\max}(\xh)=\sigma^2_{\mathcal{L},\max}(\xh)$, where $\sigma^2_{s,\max}(\xh)$ for $s=\mathcal{I}, \mathcal{L}$ are defined in \eqref{eq:sigma_max}.

Unlike widely known properties of AV, the above results always hold regardless of the preservation of negative correlation through conditions like monotonicity. 
They hold even if the covariance is increased after applying $f(\xh,\cdot)-f(x^*_{\mathcal{A},\min}, \cdot)$ to the antithetic pair. 
This is because the above analysis compares the variance of an averaged {\it pair} of antithetic observations to the variance of an {\it individual} observation. 
In contrast, classical AV variances are compared on sample means formed via AV or IID using the {\it same number} of observations.

Based on the above discussion, to have a fair comparison in our numerical results, we examined the performance of AV relative to the average of two IID pairs. 
We denote this sampling method as $s=2\mathcal{I}$. 
For $2\mathcal{I}$, we used the same estimators defined for AV in Section \ref{ssec:av} but with two IID observations instead of an AV pair. 
That is, instead of 
$g_{\mathcal{A}}(x,\tilde{\xi})=\frac{1}{2}\big(f(x,\tilde{\xi}_{\mathcal{A}})+f(x,\tilde{\xi}_{\mathcal{A}'})\big)$, where  $(\tilde{\xi}_{\mathcal{A}},\tilde{\xi}_{\mathcal{A}'})$ is an AV pair, 
we used 
$g_{2\mathcal{I}}(x,\tilde{\xi})=\frac{1}{2}\big(f(x,\tilde{\xi}_{\mathcal{I}})+f(x,\tilde{\xi}_{\mathcal{I}'})\big)$, where  $(\tilde{\xi}_{\mathcal{I}},\tilde{\xi}_{\mathcal{I}'})$ are two IID observations. 
All other notation is updated accordingly. 
Theorems \ref{thm: cp} and \ref{thm: cpq} hold for this sampling scheme by the regular IID sequential theory with minor updates in notation.

Suppose for simplicity that \ref{SP} has a unique optimal solution $x^*$. 
Observe that $\sigma^2_{2\mathcal{I}}(\xh) = \var{g_{2\mathcal{I}}(\xh,\tilde{\xi})-g_{2\mathcal{I}}(x^*,\tilde{\xi})} =\frac{1}{2}\sigma^2_{\mathcal{I}}(\xh)$.  So, the variance of AV is reduced with respect to $2\mathcal{I}$ only if the covariance term in \eqref{eq:longavvar} is negative.
To illustrate these results numerically, we picked two candidate solutions $\xh\in X$ from APL1P and PGP2 and calculated their variances.
Both problems have unique optimal solutions and are small-scale; so we can calculate these quantities exactly.

\begin{table}[!htb]
\centering
\caption{True Standard Deviations for Given $\xh$}
\label{tb: trvar}
{\small
\begin{tabular}{c|crrr}
    \toprule
      & $\xh$           & $\sigma_{\mathcal{I}}(\xh)=\sigma_{\mathcal{L}}(\xh)$ &  {$\sigma_{2\mathcal{I}}(\xh)=\frac{\sigma_{\mathcal{I}}(\xh)}{\sqrt{2}}$} &    $\sigma_{\mathcal{A}}(\xh)$    \\  \hline
APL1P & (1111.11, 2300.00)  & 1,893.03                                            &  {1,338.57}         &   860.05                    \\
PGP2  & (1.5, 5.5, 5, 4.5)  & 82.69                                               & {58.47}        &       58.25                    \\
   \bottomrule
\end{tabular}
}
\end{table}

Table \ref{tb: trvar} lists the specific candidate solutions $x$ for APL1P and PGP2, along with their corresponding standard deviations $\sigma_s(\xh)$ for each sampling method $s=\mathcal{I, L,}\ 2\mathcal{I}, \mathcal{A}$. 
Table \ref{tb: trvar} shows that, for these candidate solutions, the AV standard deviation is indeed quite smaller than its IID or LHS counterparts.
However, compared to $2\mathcal{I}$, AV significantly reduces the standard deviation for APL1P, whereas it remains approximately the same for PGP2.

We end this discussion with a remark that so far we focused on `asymptotic' variances; see \eqref{eq:sv}. 
Once again assume for simplicity of discussion that \ref{SP} has a unique optimal solution. 
Then,  $\texttt{SV}^{a}_{s,n}(\xh) \rightarrow \sigma^2_{s}(\xh)$, w.p.1 as $n \rightarrow \infty$ for all $(s,a)$ pairs. 
This is similar to sample means formed by IID, 2$\mathcal{I}$, AV, and LHS {\it all} converging to the true population mean.  
In the context of $\texttt{SV}^{a}_{s,n}(\xh)$, though, we find that IID and LHS converge to the same value, whereas AV may converge to a smaller value relative to 2$\mathcal{I}$ depending on the problem and the candidate solution for that problem.  
Note that the variance and bias of the estimator $\texttt{SV}^{a}_{s,n}(\xh)$ might differ by sampling method; we will examine this in the next section.

We are now ready to present the computational results. Before we do so, let us briefly mention the experimental setup for the sampling method $2\mathcal{I}$. We  ran the sequential procedures with $2\mathcal{I}$ using the same parameter settings discussed in the previous section. The only  exception is that we adjusted the $h'$ values for $2\mathcal{I}$ by simply scaling the $h'$ values obtained for IID with $\sqrt{2}$. 
That is, $h'$ of $(2\mathcal{I},a)$  is set to $\sqrt{2}\times h'$ of $(\mathcal{I},a)$ for both solution quality assessment methods $a=S,A$.
As an example, $h'$ of $(2\mathcal{I},S)$ for APL1P is set to $\sqrt{2}\times0.036 = 0.051$; see Table~\ref{tb: para}. 
This is because, given the same observations and using the same optimal solutions to the relevant SAA problems, the $\texttt{GAP}$ estimators for $2\mathcal{I}$ and IID will be the same, whereas their standard deviations $\sqrt{\texttt{SV}}$ will differ approximately by $\sqrt{2}$.

Table \ref{tb: cp} presents the results of the experiments. 
The left portion of Table \ref{tb: cp} compares LHS with IID, and the right portion compares AV with $2\mathcal{I}$.  In each portion, column `$T$' lists the number of iterations to stop, `$h\sqrt{\smash[b]{\texttt{SV}^{a}_{s, n_T}(\xh_T)}}+\varepsilon$' tabulates the confidence interval width on $\mathcal{G}_{\xh_T}$,  and `Coverage' presents the empirical coverage probability of this interval. 
For each of these {three} quantities, average values along with 90\% half-widths are provided. 
Finally, to facilitate the comparison of alternative sampling methods LHS and AV, column `CI Ratio' lists the ratio of the average confidence interval widths of IID to LHS (left portion) and of $2\mathcal{I}$ to AV (right portion).

To find the empirical coverage probabilities listed in Table \ref{tb: cp}, we need the optimality gaps of $x_T$.   
For small-scale problems, we calculated $\mathcal{G}_{\xh_T}$ exactly. 
For large-scale problems, we solved an SAA with 50,000 LHS observations and used its optimal value as an estimate of the true optimal value. 
Then, for each solution $\xh_T$, we estimated $\mathcal{G}_{\xh_T}$ by using the same 50,000 LHS observations. 
These  $\mathcal{G}_{\xh_T}$  estimates are used to find the empirical coverage probabilities. 
To calculate the half-widths for the coverage probabilities, we used the binomial proportion confidence intervals.

\begin{table}[!htb]
\setlength{\tabcolsep}{2pt}
\centering
\caption{Results of Computational Experiments}
\label{tb: cp}
\resizebox{\textwidth}{!}{ 
\begin{tabular}{c|crrcrr|crrcr}
\cmidrule{1-7}\cmidrule{8-12}    Problem & ($s,a$) & \multicolumn{1}{c}{$T$} & \multicolumn{1}{c}{{\small $h\sqrt{\texttt{SV}^{a}_{s, n_T}(\xh_T)}+\varepsilon$}} & \multicolumn{1}{c}{\begin{tabular}{c}
  \texttt{CI}\\
  Ratio
\end{tabular}} & \multicolumn{1}{c}{Coverage} &  \  \ \ \ \     & ($s,a$) & \multicolumn{1}{c}{$T$} & \multicolumn{1}{c}{{\small $h\sqrt{\texttt{SV}^{a}_{s, n_T}(\xh_T)}+\varepsilon$}} & \multicolumn{1}{c}{\begin{tabular}{c}
  \texttt{CI}\\
  Ratio
\end{tabular}} & \multicolumn{1}{c}{Coverage} \\
\cmidrule{1-7}\cmidrule{8-12}    APL1P & ($\mathcal{I}$, $S$) & 2.55$\pm$0.20 & 125.55$\pm$10.42 & \multicolumn{1}{c}{-} & 0.92$\pm$0.03 &       & ($2\mathcal{I}$, $S$) & 2.57$\pm$0.20 & 92.65$\pm$7.77 & \multicolumn{1}{c}{-} & 0.89$\pm$0.03 \\
          & ($\mathcal{L}$, $S$) & 6.42$\pm$0.64 & 54.26$\pm$4.88 & 2.31  & 0.94$\pm$0.02 &       & ($\mathcal{A}$, $S$) & 4.06$\pm$0.39 & 37.09$\pm$3.12 & 2.50  & 0.88$\pm$0.03 \\
\cmidrule{2-6}\cmidrule{8-12}          & ($\mathcal{I}$, $A$) & 2.43$\pm$0.17 & 204.85$\pm$11.76 & \multicolumn{1}{c}{-} & 0.99$\pm$0.01 &       & ($2\mathcal{I}$, $A$) & 2.47$\pm$0.19 & 154.56$\pm$9.04 & \multicolumn{1}{c}{-} & 0.98$\pm$0.01 \\
          & ($\mathcal{L}$, $A$) & 5.58$\pm$0.57 & 108.96$\pm$7.45 & 1.88  & 1.00$\pm$0.00 &       & ($\mathcal{A}$, $A$) & 3.19$\pm$0.28 & 71.52$\pm$4.25 & 2.16  & 0.98$\pm$0.01 \\
\cmidrule{1-7}\cmidrule{8-12}    PGP2  & ($\mathcal{I}$, $S$) & 2.49$\pm$0.27 & 22.04$\pm$2.80 & \multicolumn{1}{c}{-} & 0.74$\pm$0.04 &       & ($2\mathcal{I}$, $S$) & 2.49$\pm$0.27 & 16.74$\pm$2.11 & \multicolumn{1}{c}{-} & 0.73$\pm$0.04 \\
          & ($\mathcal{L}$, $S$) & 2.29$\pm$0.26 & 25.30$\pm$3.28 & 0.87  & 0.76$\pm$0.04 &       & ($\mathcal{A}$, $S$) & 2.57$\pm$0.34 & 17.64$\pm$2.33 & 0.95  & 0.76$\pm$0.04 \\
\cmidrule{2-6}\cmidrule{8-12}          & ($\mathcal{I}$, $A$) & 3.92$\pm$0.43 & 47.94$\pm$3.68 & \multicolumn{1}{c}{-} & 0.84$\pm$0.03 &       & ($2\mathcal{I}$, $A$) & 3.96$\pm$0.44 & 37.91$\pm$2.91 & \multicolumn{1}{c}{-} & 0.84$\pm$0.04 \\
          & ($\mathcal{L}$, $A$) & 3.65$\pm$0.44 & 42.49$\pm$3.81 & 1.13  & 0.84$\pm$0.04 &       & ($\mathcal{A}$, $A$) & 3.46$\pm$0.41 & 36.26$\pm$3.01 & 1.05  & 0.86$\pm$0.03 \\
\cmidrule{1-7}\cmidrule{8-12}    LandS3 & ($\mathcal{I}$, $S$) & 4.05$\pm$0.32 & 0.13$\pm$0.01 & \multicolumn{1}{c}{-} & 0.98$\pm$0.01 &       & ($2\mathcal{I}$, $S$) & 4.08$\pm$0.33 & 0.10$\pm$0.00 & \multicolumn{1}{c}{-} & 0.96$\pm$0.02 \\
          & ($\mathcal{L}$, $S$) & 6.81$\pm$0.62 & 0.10$\pm$0.00 & 1.31  & 1.00$\pm$0.00 &       & ($\mathcal{A}$, $S$) & 10.41$\pm$0.84 & 0.05$\pm$0.00 & 1.87  & 0.99$\pm$0.01 \\
\cmidrule{2-6}\cmidrule{8-12}          & ($\mathcal{I}$, $A$) & 3.34$\pm$0.25 & 0.23$\pm$0.01 & \multicolumn{1}{c}{-} & 0.99$\pm$0.01 &       & ($2\mathcal{I}$, $A$) & 3.41$\pm$0.26 & 0.18$\pm$0.01 & \multicolumn{1}{c}{-} & 0.99$\pm$0.01 \\
          & ($\mathcal{L}$, $A$) & 3.79$\pm$0.33 & 0.19$\pm$0.01 & 1.22  & 1.00$\pm$0.00 &       & ($\mathcal{A}$, $A$) & 12.19$\pm$0.93 & 0.08$\pm$0.00 & 2.18  & 1.00$\pm$0.00 \\
\cmidrule{1-7}\cmidrule{8-12}    DB1   & ($\mathcal{I}$, $S$) & 1.60$\pm$0.16 & 6.54$\pm$1.32 & \multicolumn{1}{c}{-} & 0.82$\pm$0.06 &       & ($2\mathcal{I}$, $S$) & 1.60$\pm$0.17 & 4.85$\pm$0.98 & \multicolumn{1}{c}{-} & 0.82$\pm$0.06 \\
          & ($\mathcal{L}$, $S$) & 1.73$\pm$0.20 & 5.97$\pm$1.32 & 1.09  & 0.75$\pm$0.07 &       & ($\mathcal{A}$, $S$) & 1.42$\pm$0.12 & 5.51$\pm$0.96 & 0.88  & 0.78$\pm$0.07 \\
\cmidrule{2-6}\cmidrule{8-12}          & ($\mathcal{I}$, $A$) & 1.62$\pm$0.16 & 11.52$\pm$1.12 & \multicolumn{1}{c}{-} & 0.89$\pm$0.05 &       & ($2\mathcal{I}$, $A$) & 1.64$\pm$0.16 & 8.88$\pm$0.87 & \multicolumn{1}{c}{-} & 0.84$\pm$0.06 \\
          & ($\mathcal{L}$, $A$) & 2.40$\pm$0.43 & 9.62$\pm$1.02 & 1.20   & 0.94$\pm$0.04 &       & ($\mathcal{A}$, $A$) & 2.27$\pm$0.30 & 5.80$\pm$0.77 & 1.53  & 0.90$\pm$0.05 \\
\cmidrule{1-7}\cmidrule{8-12}    SSN   & ($\mathcal{I}$, $S$) & 6.27$\pm$0.71 & 3.28$\pm$0.13 & \multicolumn{1}{c}{-} & 1.00$\pm$0.00 &       & ($2\mathcal{I}$, $S$) & 6.23$\pm$0.74 & 2.67$\pm$0.11 & \multicolumn{1}{c}{-} & 1.00$\pm$0.00 \\
          & ($\mathcal{L}$, $S$) & 6.41$\pm$0.80 & 2.51$\pm$0.08 & 1.31  & 1.00$\pm$0.00 &       & ($\mathcal{A}$, $S$) & 8.09$\pm$1.02 & 2.52$\pm$0.10 & 1.06  & 1.00$\pm$0.00 \\
\cmidrule{2-6}\cmidrule{8-12}          & ($\mathcal{I}$, $A$) & 5.94$\pm$0.62 & 3.91$\pm$0.12 & \multicolumn{1}{c}{-} & 1.00$\pm$0.00 &       & ($2\mathcal{I}$, $A$) & 6.24$\pm$0.60 & 3.23$\pm$0.10 & \multicolumn{1}{c}{-} & 1.00$\pm$0.00 \\
          & ($\mathcal{L}$, $A$) & 1.78$\pm$0.35 & 3.22$\pm$0.10 & 1.21  & 1.00$\pm$0.00 &       & ($\mathcal{A}$, $A$) & 6.35$\pm$0.64 & 3.16$\pm$0.09 & 1.02  & 1.00$\pm$0.00 \\
\cmidrule{1-7}\cmidrule{8-12}    STORM & ($\mathcal{I}$, $S$) & 2.25$\pm$0.29 & 505.28$\pm$62.15 & \multicolumn{1}{c}{-} & 0.91$\pm$0.05 &       & ($2\mathcal{I}$, $S$) & 2.32$\pm$0.31 & 386.10$\pm$46.73 & \multicolumn{1}{c}{-} & 0.89$\pm$0.05 \\
          & ($\mathcal{L}$, $S$) & 6.54$\pm$0.95 & 400.64$\pm$66.12 & 1.26  & 0.94$\pm$0.04 &       & ($\mathcal{A}$, $S$) & 3.67$\pm$0.48 & 184.43$\pm$29.91 & 2.09  & 0.88$\pm$0.05 \\
\cmidrule{2-6}\cmidrule{8-12}          & ($\mathcal{I}$, $A$) & 2.69$\pm$0.40 & 643.71$\pm$43.14 & \multicolumn{1}{c}{-} & 0.99$\pm$0.02 &       & ($2\mathcal{I}$, $A$) & 2.59$\pm$0.35 & 491.73$\pm$32.90 & \multicolumn{1}{c}{-} & 0.98$\pm$0.02 \\
          & ($\mathcal{L}$, $A$) & 1.92$\pm$0.27 & 576.93$\pm$48.47 & 1.12  & 0.99$\pm$0.02 &       & ($\mathcal{A}$, $A$) & 4.86$\pm$0.69 & 246.99$\pm$22.35 & 1.99  & 0.98$\pm$0.02 \\
\cmidrule{1-7}\cmidrule{8-12}    20TERM & ($\mathcal{I}$, $S$) & 2.79$\pm$0.37 & 65.80$\pm$5.67 & \multicolumn{1}{c}{-} & 1.00$\pm$0.00 &       & ($2\mathcal{I}$, $S$) & 2.76$\pm$0.36 & 48.98$\pm$4.23 & \multicolumn{1}{c}{-} & 0.97$\pm$0.03 \\
          & ($\mathcal{L}$, $S$) & 3.96$\pm$0.55 & 54.33$\pm$4.31 & 1.21  & 1.00$\pm$0.00 &       & ($\mathcal{A}$, $S$) & 3.18$\pm$0.38 & 38.09$\pm$2.87 & 1.29  & 1.00$\pm$0.00 \\
\cmidrule{2-6}\cmidrule{8-12}          & ($\mathcal{I}$, $A$) & 2.85$\pm$0.34 & 81.38$\pm$5.50 & \multicolumn{1}{c}{-} & 1.00$\pm$0.00 &       & ($2\mathcal{I}$, $A$) & 2.81$\pm$0.34 & 62.98$\pm$4.28 & \multicolumn{1}{c}{-} & 1.00$\pm$0.00 \\
          & ($\mathcal{L}$, $A$) & 2.55$\pm$0.33 & 76.10$\pm$5.29 & 1.07  & 1.00$\pm$0.00 &       & ($\mathcal{A}$, $A$) & 2.53$\pm$0.35 & 50.29$\pm$3.67 & 1.25  & 1.00$\pm$0.00 \\
\cmidrule{1-7}\cmidrule{8-12}    BAA99-20 & ($\mathcal{I}$, $S$) & 11.65$\pm$1.48 & 1876.71$\pm$103.87 & \multicolumn{1}{c}{-} & 1.00$\pm$0.00 &       & ($2\mathcal{I}$, $S$) & 10.56$\pm$1.37 & 1452.68$\pm$74.15 & \multicolumn{1}{c}{-} & 1.00$\pm$0.00 \\
          & ($\mathcal{L}$, $S$) & 16.52$\pm$2.30 & 1589.76$\pm$68.49 & 1.18  & 1.00$\pm$0.00 &       & ($\mathcal{A}$, $S$) & 14.82$\pm$1.95 & 1203.62$\pm$53.54 & 1.21  & 1.00$\pm$0.00 \\
\cmidrule{2-6}\cmidrule{8-12}          & ($\mathcal{I}$, $A$) & 9.73$\pm$1.18 & 2919.57$\pm$127.62 & \multicolumn{1}{c}{-} & 1.00$\pm$0.00 &       & ($2\mathcal{I}$, $A$) & 10.37$\pm$1.13 & 2323.75$\pm$93.62 & \multicolumn{1}{c}{-} & 1.00$\pm$0.00 \\
          & ($\mathcal{L}$, $A$) & 23.97$\pm$2.51 & 2466.95$\pm$98.82 & 1.18  & 1.00$\pm$0.00 &       & ($\mathcal{A}$, $A$) & 16.93$\pm$1.98 & 1758.87$\pm$55.31 & 1.32  & 1.00$\pm$0.00 \\
\cmidrule{1-7}\cmidrule{8-12}    \end{tabular}
    }
\end{table}

Table \ref{tb: cp} reveals {the following} observations.  
First, both AV and LHS reduce confidence interval widths compared to IID. 
Second, AV typically has the higher \texttt{CI} ratios---hence the lower relative confidence interval widths---when either SRP or A2RP is fixed.
We will examine this in more detail in the next section.
The large \texttt{CI} ratio (or relatively tight confidence interval width) of AV comes with a caveat.
When SRP is used, the coverage probability  can drop below $2\mathcal{I}$; see, e.g., APL1P, DB1, and STORM. 
The low coverage probability can typically be alleviated by using A2RP.

Table~\ref{tb: cp} also shows that LHS and AV sequential procedures can take a longer time to stop relative to IID and $2\mathcal{I}$ when SRP is used.  With A2RP, when we compare the stopping iteration $T$ of LHS and AV sequential procedures with each other, we observe that (i) when LHS stops later, it typically takes slightly longer time to stop than AV, however (ii) when AV stops later than LHS, it can do so by a large margin.  This can be seen in the test problems LandS3, SSN, and STORM.

Let us now briefly discuss the performance of SRP versus A2RP in the sequential setting. 
SRP confidence interval widths are typically tighter than those of A2RP.
However, for some problems like PGP2 and DB1, SRP's smaller confidence interval width causes a less-than-desired coverage probability.
This is because, it can happen that, although $\texttt{SV}^{a}_{s, n_T}(\xh_T)$ is zero (hence \texttt{GAP}$^a_{s, n_T}$($\xh_T$) is zero), the optimality gap $\mathcal{G}_{\xh_T}$ is not.
In other words, the procedure sees $\xh_T$ as an optimal solution even though it is not. 

A2RP considerably lowers these false identifications compared to SRP because now two sample variances must be zero.
Consequently, both the confidence interval width and the coverage probability of A2RP are higher than SRP's. 
These observations---with or without AV and LHS---remain consistent with the non-sequential setting. 
In our computational experiments, within A2RP, we observed that the LHS sequential procedure has fewer false identifications compared to AV.

\subsection{Comparison of Non-Sequential and Sequential Approaches}
\label{ssec:comparison}
\vspace*{-0.03in}

We now compare the non-sequential and sequential approaches to solution quality assessment in more detail. 
Because A2RP has better small-sample behavior, we continue the analysis here focusing on A2RP.

The sequential approach is different from the non-sequential approach in a number of respects.
A major difference is their purpose:
The sequential approach aims to detect a high-quality solution, whereas the non-sequential approach just assesses the quality of a given solution.
Although their purposes are different, both approaches share a final output, a confidence interval on the optimality gap. 
However, these confidence intervals are formed differently.

Table \ref{tb: nn_ss_CI} summarizes the non-sequential and the sequential confidence intervals with A2RP for a given $\alpha$, where $z_{\alpha}$ is $1-\alpha$ quantile of the standard normal distribution.
The non-sequential confidence interval width is sum of a \texttt{GAP} estimator plus its sampling error, whereas the sequential interval width is mainly based on $\sqrt{\texttt{SV}}$. 
While both intervals ensure an asymptotic $(1-\alpha)\%$ confidence, the `$\alpha$' in the sequential procedure is hidden in the sample-size formulas---see, e.g., \eqref{nk_IID} and \eqref{ineq: ss_cpq}---and the `\texttt{GAP}' in the sequential procedure appears in the stopping rule (\ref{T}).

	\begin{table}[htb]
	\centering
	\caption{Non-Sequential and Sequential Approximate $(1-\alpha)\%$ Confidence Intervals (\texttt{CI}) with A2RP}
	\label{tb: nn_ss_CI}
	\begin{tabular}{ccc}
		\toprule
		Procedure & Non-Sequential \texttt{CI}& Sequential \texttt{CI} \\ \hline 
		&& \vspace{-.4cm}\\ 
		AV & $\left[0,\texttt{GAP}^{A}_{\mathcal{A}, n}(\xh)+z_{\alpha}\sqrt{\frac{\texttt{SV}^{A}_{\mathcal{A}, n}(\xh)}{n/2}} \right]$&$\left[0,h\sqrt{\texttt{SV}^{A}_{\mathcal{A}, n_T}(\xh_T)} +\varepsilon\right]$ \vspace{0.2cm} \\ 
		LHS & $\left[0,\texttt{GAP}^{A}_{\mathcal{L}, n}(\xh)+z_{\alpha}\sqrt{\frac{\texttt{SV}^{A}_{\mathcal{L}, n}(\xh)}{n}} \right]$&$\left[0,h\sqrt{\texttt{SV}^{A}_{\mathcal{L}, n_T}(\xh_T)} +\varepsilon\right]$ \\ 		\bottomrule
	\end{tabular}
\end{table}

For brevity, we illustrate the differences between non-sequential and sequential approaches in detail on one large-scale problem, BAA99-20.  
We then present a summary of results for all test problems. 
For each test problem, we picked a specific $\xh \in {X}$ by solving an independent SAA problem. 
For large-scale problems, we used 50,000 LHS samples to estimate $\mathcal{G}_{\xh}$ and $\sigma_{\mathcal{L}}(\xh)=\sigma_{\mathcal{I}}(\xh)$.
We simply estimated $\sigma_{2\mathcal{I}}(\xh)$ by dividing $\sigma_{\mathcal{I}}(\xh)$ by $\sqrt{2}$. 
Similarly, we used 25,000 AV pairs to estimate $\sigma_{\mathcal{A}}(\xh)$.
For small-scale problems, we directly calculated these quantities, except for $\sigma_{\mathcal{A}}(\xh)$ of LandS3, for which we also used the above estimation approach. 
To have a more direct comparison, we forced the sequence of candidate solutions $\left\lbrace\xh_1,\xh_2,\ldots \right\rbrace$ for the sequential procedure to be the same $\xh$ at every iteration.
This way, we only generate confidence intervals on the optimality gap of $\xh$ and evaluate the effect of the stopping rule. 
We performed 300 independent replications for each test problem and set the sample size of the non-sequential interval and the initial sample size of the sequential procedure as the $n_1$ values listed in Table~\ref{tb: para}. 
For sequential procedures, we used the same parameter settings discussed before.

For BAA99-20, Table \ref{tb: BAA99_result} lists (i) the average values of \texttt{GAP}, $\sqrt{\texttt{SV}}$, and \texttt{CI} estimators, along with their (ii) percentage reductions in average values,  variance and bias. 
As before, we compare LHS to IID and AV to $2\mathcal{I}$. 
Bias reductions are only presented for \texttt{GAP} and $\sqrt{\texttt{SV}}$ estimators.  
For the $\sqrt{\texttt{SV}}$ estimator, we list percentage reductions in {\it relative bias} instead of bias because true variances are different for different sampling methods. 
We calculate the relative bias for the $\sqrt{\texttt{SV}}$ estimator by dividing its bias by its corresponding estimated values of $\sigma_{s}(\xh)$, $s=\mathcal{I}, 2\mathcal{I}, \mathcal{A}, \mathcal{L}$.
For the specific candidate solution $x \in \mathbb{R}^{20}$ of BAA99-20, we estimated $\mathcal{G}_{\xh}=95.26$, $\sigma_{\mathcal{L}}(\xh)=\sigma_{\mathcal{I}}(\xh)=1,928.77$, $\sigma_{2\mathcal{I}}(\xh)=1,928.77/\sqrt{2}=1,363.85$, and $\sigma_{\mathcal{A}}(\xh)=1,269.23$ using the aforementioned methodology.

\begin{table}[htb]
	\centering
	\small
	\caption{Results of BAA99-20 with Non-Sequential (NonS) and Sequential (Seq) Approaches}
	\label{tb: BAA99_result}
	\begin{threeparttable}
	\begin{tabular}{c|c|cccc|cccc|ccc}
		\toprule
		\multirow{3}{*}{}               & \multirow{3}{*}{$s$} & \multicolumn{4}{c|}{$\texttt{GAP}^{A}_{s, n_T}(\xh)$}    & \multicolumn{4}{c|}{$\sqrt{\texttt{SV}^{A}_{s, n_T}(\xh)}$} & \multicolumn{3}{c}{\texttt{CI}}                       \\
		\cline{3-13}
		&                           & \multirow{2}{*}{average} & \multicolumn{3}{c|}{\% reduction of} & \multirow{2}{*}{average}  & \multicolumn{3}{c|}{\% reduction of} & \multirow{2}{*}{average} & \multicolumn{2}{c}{\% reduction of}\\ \cline{4-6} \cline{8-10}  \cline{12-13}
		&                           &                           & ave & var  & bias      &                            & ave & var   &  rel.bias       &                           & ave & var        \\
		\hline
		    \multirow{4}[4]{*}{NonS\tnote{1}\ } & $\mathcal{I}$     &                 919.68  &  -   & -  &  -    &    8,128.82  &  -  & -  &  -    &        1,385.56  &  -  & - \\
          & $\mathcal{L}$     &                 670.71  & 27\% &  61\%  & 30\%  &    6,445.30 & 21\%  & 56\%  & 27\%  &        1,040.11 & 25\%  & 62\% \\
\cmidrule{2-13}          & $2\mathcal{I}$    &                 919.68  &  -    &  -    & - &             5,733.58     &  -    &  -    &  - &       1,384.40  &  - & -  \\
          & $\mathcal{A}$     &                 807.92  & 12\%  & 45\%  & 14\%  &    4,596.79  & 20\% &  67\%  & 18\%  &        1,180.50 & 15\% & 55\% \\
    \midrule
    \multirow{4}[4]{*}{Seq} & $\mathcal{I}$     &                 802.05  &  -    &  -    &  - &   8,561.53  &  -    &  -    &  - &       2,578.64  &  - & -  \\
          & $\mathcal{L}$     &                 613.26  & 24\%  & 57\%  & 27\%  &    7,556.27  &  12\% & 44\%  & 15\%  &        2,169.53 & 16\%  & 49\% \\
\cmidrule{2-13}          & $2\mathcal{I}$    &                 803.13  &  -    &  -   & -  &    6,063.16  &  -    &  -    &   - &     2,087.57  &  - & -  \\
          & $\mathcal{A}$     &                 671.66  & 16\%  & 49\%  & 19\%  &    4,741.53  &  22\% &  60\%  & 21\%  &        1,672.72 & 20\% & 58\%    
		\\ \bottomrule
	\end{tabular}
\begin{tablenotes}
		{\small 
	\item[1] For the non-sequential procedure, $n_T$ in $\texttt{GAP}^{A}_{s, n_T}(\xh)$ and $\sqrt{\smash[b]{\texttt{SV}^{A}_{s, n_T}(\xh)}}$ is set to  $n_T=500$}
	\end{tablenotes}
\end{threeparttable}
\end{table}

For all eight test problems, Table \ref{tb: all_result} provides a summary based on the experiments conducted in this section. 
It lists the number of test problems that has a higher percentage reduction in average values, variance, and bias (or relative bias) among LHS and AV.  As in previous results, we compare LHS with IID and AV with $2\mathcal{I}$ and pick the winner according to their relative performances.  
For example, looking at Table \ref{tb: BAA99_result}, we record one win for LHS in the category ``\% reduction of relative bias of $\sqrt{\texttt{SV}}$ in the non-sequential setting'' because $27\% > 18\%$.

\begin{table}[htb]
	\centering
		\caption{Summary of Results for All Test Problems with Non-Sequential (NonS) and Sequential (Seq) Approaches}
	\label{tb: all_result}
		\resizebox{\textwidth}{!}{
    \begin{threeparttable}
    \begin{tabular}{c|c|ccc|ccc|cc}
		\toprule
		\multirow{3}{*}{}               & \multirow{3}{*}{} & \multicolumn{3}{c|}{$\texttt{GAP}^{A}_{s, n_T}(\xh)$}    & \multicolumn{3}{c|}{$\sqrt{\texttt{SV}^{A}_{s, n_T}(\xh)}$} & \multicolumn{2}{c}{\texttt{CI}}                       \\
		\cline{3-10}
		  &           & \multicolumn{3}{c|}{\% reduction of} &    \multicolumn{3}{c|}{\% reduction of} &  \multicolumn{2}{c}{\% reduction of}  
		  \\  
		  \cline{3-10}
	  	   &           &    average       & variance  & bias      &  average            & variance   &  rel.bias       &      average      & variance        \\
		\hline
		    \multirow{2}[2]{*}{NonS\tnote{1}\ } & \# of $\mathcal{L}$ better     &  8     & 8     & 8     & 6     & 3     & 5     & 7 & 6  \\
          & \# of $\mathcal{A}$ better     &                 0     & 0     & 0     & 2     & 5     & 3     & 1 & 2 \\
    \midrule
    \multirow{2}[2]{*}{Seq} & \# of $\mathcal{L}$ better     &               7     & 6     & 7     & 3    & 2     & 4     &  4     & 2 \\
          & \# of $\mathcal{A}$ better     &                1     & 2     & 1     & 5    & 6     & 4     & 4     & 6
		\\ \bottomrule
	\end{tabular}
	\begin{tablenotes}
		{\small 
		\item[1]  For the non-sequential procedure, $n_T$ in $\texttt{GAP}^{A}_{s, n_T}(\xh)$ and $\sqrt{\smash[b]{\texttt{SV}^{A}_{s, n_T}(\xh)}}$ is set to $n_T=n_1$ listed in Table \ref{tb: para}
		}
	\end{tablenotes}
	\end{threeparttable}
	}
\end{table}

Let us first look at the individual \texttt{GAP} and $\sqrt{\texttt{SV}}$ estimators in the non-sequential setting.
LHS performs superior for the \texttt{GAP} estimator in this setting, yielding higher percentage reductions in all three metrics---average values, variance, and bias---for all problems tested.
We also find that LHS is more effective in reducing the average values and relative bias of $\sqrt{\texttt{SV}}$ in the non-sequential setting. 
Conversely, AV is more effective in reducing the variance of $\sqrt{\texttt{SV}}$.

For the candidate solutions $x \in X$ picked in this experiment, we estimate $\sigma_{\mathcal{A}}(x) \approx \sigma_{2\mathcal{I}}(x)$ for five problems and find that $\sigma_{\mathcal{A}}$ reduces $\sigma_{2\mathcal{I}}$ to a considerable degree for three. 
In the latter case, we may expect AV to be more effective for the $\sqrt{\texttt{SV}}$ estimator.  For BAA99-20, this appears to be true, as indicated by our estimates $\sigma_{\mathcal{A}}(\xh)=1,269.23< \sigma_{2\mathcal{I}}(\xh)=1,363.85$ whereas $\sigma_{\mathcal{L}}(\xh)=\sigma_{\mathcal{I}}(\xh)=1,928.77$.  
However, $\sigma^2_{s}(\xh)$ involves an optimal solution $x^*$; see \eqref{eq: true}. In contrast, \texttt{SV} uses an {\it estimate} of an optimal solution $x_n^*$; see \eqref{eq:svsrp} for an example \texttt{SV} that uses IID with SRP.
In addition, even if we replace $x_n^*$ by an appropriate optimal solution $x^*$, \texttt{SV} still has error due to sampling. For BAA99-20 and in general for this computational experiment, it appears that LHS improves estimates of optimal solutions and thus reduces the bias of $\sqrt{\texttt{SV}}$ (relative to IID) more than AV (relative to $2\mathcal{I}$) in the non-sequential setting.

Comparison of results on \texttt{GAP} and $\sqrt{\texttt{SV}}$ estimators between the sequential and non-sequential procedures indicate that LHS loses some effectiveness and AV gains some advantages going from non-sequential to sequential setting. 
Let us investigate this more. 
First, we numerically observe that \texttt{GAP} and $\sqrt{\texttt{SV}}$ estimators are positively correlated in both settings.  This is similar to sample means and sample variances being positively correlated in the classical settings \citep[e.g.,][]{glynn1992asymptotic}. We also find that correlation between the two estimators when the sequential procedures stop is typically higher than the non-sequential setting. 
Furthermore, LHS estimators tend to have less correlation.

Next, recall the stopping rule
\[
\texttt{GAP} \leq h'\sqrt{\texttt{SV}}+\varepsilon'. 
\] 
As discussed above, in the non-sequential setting, \texttt{GAP} of AV is relatively larger. This is because it has a larger bias. Also,  $\sqrt{\texttt{SV}}$ of AV frequently takes the smallest value among all the sampling methods.   
This is because (i) it is estimating a typically smaller variance than that of LHS and sometimes of $2\mathcal{I}$ (recall \eqref{ineq: cauchy1} and Table \ref{tb: trvar}) and (ii) it tends to reduce bias compared to $2\mathcal{I}$. 

In the sequential setting, due to the above stopping rule, having a smaller $\sqrt{\texttt{SV}}$  value forces the sequential procedure to find a smaller value of \texttt{GAP}. 
If the \texttt{GAP} is also relatively small---like for LHS---then this does not cause that much of a difference. 
In contrast, we observe that especially when the \texttt{GAP} estimator of AV takes relatively large values, then the AV sequential procedure tends to take a longer time to stop, searching for a small \texttt{GAP} that satisfies a more restrictive condition. 
This is true even though our selection of $h'$ tries to balance these effects between different sampling methods.

Moreover, due to the positive correlation between $\texttt{GAP}$ and $\sqrt{\texttt{SV}}$, when the AV sequential procedure stops, a smaller $\texttt{GAP}$ tends to induce a smaller $\sqrt{\texttt{SV}}$ relative its non-sequential counterpart. As a result, we conjecture the performance of AV on both $\texttt{GAP}$ and  $\sqrt{\texttt{SV}}$ estimators in the sequential setting improves. 
AV's performance gains in the sequential setting is not enough to match the superior performance of LHS for the \texttt{GAP} estimator. 
On the other hand, AV does gain advantage for the $\sqrt{\texttt{SV}}$ estimator in this setting.

Let us now investigate the confidence intervals. 
The non-sequential procedure has larger reductions in confidence interval widths using LHS. 
To see why this is happening, recall the following:
The  non-sequential \texttt{CI} consists of (i) \texttt{GAP} plus (ii) its standard error (\texttt{SE})  (see Table \ref{tb: nn_ss_CI}).
For \texttt{GAP}, LHS is significantly better.
For \texttt{SE}, observe that \texttt{SE} of AV and $2\mathcal{I}$ have denominator $n/2$ whereas \texttt{SE} of LHS {and IID have} denominator $n$. 
Therefore, the {\it relative} performances on \texttt{SE} are determined by the relative performances on $\sqrt{\texttt{SV}}$. Because LHS is also more effective in reducing the average $\sqrt{\texttt{SV}}$ for these experiments, LHS has larger reductions in non-sequential \texttt{CI} widths.

In contrast, Table \ref{tb: all_result} reveals that the performance of LHS and AV equalize in the sequential setting for reducing \texttt{CI} widths, with AV inducing more variance reduction. 
This is because \texttt{CI} of the sequential procedure relies mainly on the $\sqrt{\texttt{SV}}$ estimator with constant $h$ and $\varepsilon$.
Recall that $\varepsilon$ is insignificant (e.g., $2 \times 10^{-7}$).
Because $\sqrt{\texttt{SV}}$ forms the main part of this \texttt{CI} and AV tends to produce smaller  $\sqrt{\texttt{SV}}$ values in the sequential setting, AV may have an advantage in this respect.

The results in Table \ref{tb: cp} indicate that AV sequential procedures can be even more effective for these eight  problems than Table \ref{tb: all_result} suggests. 
Recall that in Table \ref{tb: cp}, AV often provided larger \texttt{CI} ratios than LHS for many test problems. 
The difference between the two tables is that the experiments summarized in Table \ref{tb: cp} have many distinct candidate solutions $\xh_T$ when the sequential procedures stop, whereas the controlled experiments of Table \ref{tb: all_result} have $\xh$ fixed.

To further investigate this, we estimated $\sigma^2_s(x_T)$ for $s=\mathcal{I, A, \text{2}I, L}$ and various $x_T$ obtained as a result of the experiments conducted in Section \ref{sssec: Comparison}. We found that for the majority of $x_T$ (but not for all), $\sigma^{2}_{\mathcal{A}}(x_T)<\sigma^{2}_{2\mathcal{I}}(x_T)$.
In fact, AV increased the variance relative to $2\mathcal{I}$ for some candidate solutions, but these were fewer in number for most problems. 
Our numerical results indicate that, even though LHS might provide more reliable estimators in small sample sizes, having $\sigma^{2}_{\mathcal{A}}(x_T)<\sigma^{2}_{2\mathcal{I}}(x_T)$ can result in a similar behavior in their sample estimators \texttt{SV} for some problems and candidate solutions.  This effect, combined with AV's performance gains in the sequential setting, helps explain AV's relatively tighter \texttt{CI} widths observed in Table \ref{tb: cp}. 
\vspace*{-0.09in}

\subsection{Recommendations}
\vspace*{-0.05in}

First, we recommend the use of A2RP over SRP.
As discussed above, SRP can falsely identify an optimal solution. 
This lowers the coverage probabilities to less-than-desired values for some problems. 
A2RP largely remedies this issue, and it has comparable or sometimes faster solution times than SRP. 

Within A2RP, if obtaining relatively tight confidence intervals is most important, then, we recommend using AV.  
However, this may come at the expense of longer solution times for some problems. 
If, on the other hand, high coverage probabilities with relatively small computation times are preferred, then LHS could be a better choice. 
LHS also can have fewer false identifications of optimal solutions and can have lower correlations between the point estimator \texttt{GAP} and its sample variance estimator \texttt{SV}, leading to more reliable results in a variety of settings. 
\vspace*{-0.05in}

\section{Conclusions and Future Work}
\label{sec: con}
\vspace*{-0.05in}

In this paper, we investigated the use of variance reduction techniques AV and LHS in sequential sampling procedures both theoretically and empirically. 
We also contrasted their use in the sequential and non-sequential settings.  
We highlight some insights gained from these results and computational experiments. 
\begin{itemize}
\item Under conditions like additivity and monotonicity of the objective function $f(x,\tilde{\xi})$ in each dimension of $\tilde{\xi}$ for each $x \in X$, LHS can be used in the sequential setting with sublinear sample sizes.  When such conditions cannot be verified, larger sample sizes must be used to guarantee asymptotic validity.  Similar conclusions hold for AV, but under different conditions. 

\item LHS is very effective for the optimality gap point estimator \texttt{GAP}, reducing both its bias and variance in all settings.  AV, on the other hand, is less effective on both accounts for the \texttt{GAP} estimator.  
When we switch to the sample variance estimator \texttt{SV}, we observe that, while LHS can provide reliable estimators in the finite-sample non-sequential setting, AV gains some advantage in the sequential setting.

\item  In the non-sequential setting, LHS provides higher reductions in confidence interval widths. This is because of the superior relative performance of LHS for both the \texttt{GAP} and sampling error portions of the non-sequential \texttt{CI}.

\item  In contrast, in the sequential setting, AV typically results in relatively smaller confidence interval widths---unlike the non-sequential setting.  This is because the sequential confidence interval is largely based on the sample variance estimator, and AV performs relatively better for this estimator in the sequential setting.

\item  The sequential stopping rule appears to induce some changes in the estimators, especially for AV. For this sampling method, the smaller absolute value of \texttt{SV} together with relatively more biased and hence larger values of \texttt{GAP} can result in  AV sequential procedures
to stop later for some problems, in search of a small \texttt{GAP}.  
This effect, combined with the positive correlation between AV's \texttt{GAP} and \texttt{SV} estimators, appears to also lower the average \texttt{SV} values for AV sequential procedures. 
This, in turn, 
improves the relative performance of AV sequential confidence intervals.

\item  Furthermore, for some problems and candidate solutions, having $\sigma^2_{\mathcal{A}}(x)<\sigma^2_{2\mathcal{I}}(x)$ appears to induce a similar behavior for the \texttt{SV} estimator in finite sample sizes. 
Consequently, for problems that have many solutions where AV reduces the variance relative to $2\mathcal{I}$, this effect gives AV an additional  advantage---especially in the sequential setting. 
Because the non-sequential \texttt{CI} is composed of \texttt{GAP} plus the sampling error and LHS performs superior for \texttt{GAP}, this relative advantage of AV  might be lessened in the non-sequential setting.

\item Overall, the results of the paper show that both LHS and AV, when used for sequential sampling in stochastic programming, provide attractive alternatives : They possess the same desired theoretical properties but provide smaller confidence interval widths with comparable solution times.  
Of the two alternative sampling schemes, when relatively tighter confidence intervals are paramount, AV can be preferred. 
When earlier stopping times with reliable estimates are most important, LHS can be chosen. 
\end{itemize}

Future work includes investigating sequential sampling procedures for  other classes of problems and  other variance reduction techniques than those considered in this paper. 
For instance, while some risk-averse problems can be put into the expectation framework presented in this paper (e.g., minimizing CVaR), sampling schemes that are aimed to find regular expectations might fail. 
Importance sampling can be useful in these cases. 
While there is some work in this area \citep{Barrera2016,kozmik_morton_15}, a thorough analysis of variance reduction schemes for sequential sampling of risk-averse and other classes of problems requires further investigation. 

Another area that merits future research is devising LHS that is aimed to `adapt' its partitioning to a particular problem.  
For instance, \citet{Xi2004Smart} consider drawing more samples in some important regions by considering the correlation structure, \citet{harris_etal_95} aim to minimize correlations among the LHS cells, and \citet{Zolan2017Optimizing} study finding partitions that minimizes the sampling variance. 
\citet{etore_etal_11} and \cite{pierre2011combined} consider adaptive stratification by adjusting the hyperrectangles and the number of samples allocated to each hyperrectangle during sequential sampling.
It would be interesting to investigate such variants of LHS and other stratified sampling techniques for risk-averse problems. 
\vspace*{-0.05in}


\baselineskip0.17in
\bibliographystyle{plainnat}
\setlength\bibsep{0.07in}
\bibliography{ref}\vspace*{-0.05in} 


\end{document}